\newtheorem{proposition}{Proposition}[section]
\newtheorem{theorem}[proposition]{Theorem}
\newtheorem{lemma}[proposition]{Lemma}
\begin{document}
\date{}
\title{\bf Perturbation analysis of $A_{T,S}^{(2)}$ on Hilbert spaces}
\author{Fapeng Du\thanks{E-mail: jsdfp@163.com}\\
   School of Mathematical \& Physical Sciences, Xuzhou Institute of Technology\\
  Xuzhou 221008, Jiangsu Province, P.R. China\\
 \and
 Yifeng Xue\thanks{Corresponding author, E-mail: yfxue@math.ecnu.edu.cn} \\
 Department of mathematics, East China Normal University\\
  Shanghai 200241, P.R. China
}
\maketitle

\begin{abstract}
In this paper, we investigate the perturbation analysis  of $A_{T,S}^{(2)}$ when $T,\,S$ and $A$
have some small perturbations on Hilbert spaces. We present the conditions that make the perturbation of $A_{T,S}^{(2)}$ is stable.
The explicit representation for the perturbation of $A_{T,S}^{(2)}$ and
the perturbation bounds are also obtained.

\hspace*{1mm}

\noindent{2000 {\it Mathematics Subject Classification\/}: 15A09, 47A55}\\

\noindent{\it Key words:  gap, subspace, Moore--Penrose inverse, stable perturbation }
\end{abstract}

\section{Introduction}

Let $X,Y$ be Banach spaces and let $B(X,Y)$ denotes the set of bounded linear operators from $X$ to $Y$. For an operator
$A \in B(X,Y)$, let $R(A)$ and $N(A)$ denote the range and kernel of $A$, respectively. Let $T$ be a closed subspace of
$X$ and $S$ be a closed subspace of $Y$. Recall that $A_{T,S}^{(2)}$ is the unique operator $G$ satisfying
\begin{equation}\label{1eqa}
GAG=G,\quad R(G)=T,\quad N(G)=S.
\end{equation}
It is known that (\ref{1eqa}) is equivalent to the following condition:
\begin{equation}\label{1eqb}
N(A)\cap T=\{0\},\quad AT\dotplus S=Y
\end{equation}
(cf. \cite{DS, DSW}). It is well--known that the commonly five kinds of generalized inverse:
the Moore--Penrose inverse $A^+$, the weighted Moore--Penrose inverse $A^+_{MN}$,
the Drazin inverse $A^D$, the group inverse $A^\#$ and the Bott--Duffin inverse $A^{(-1)}_{(L)}$
can be reduced to a $A_{T,S}^{(2)}$ for certain choices of $T$ and $S$.

The perturbation analysis of $A_{T,S}^{(2)}$ have been studied by
several authors (see \cite{WW1, WW2}, \cite{ZW1, ZW2}) when $X$ and
$Y$ are of finite--dimensional. A lot of results about the error
bounds have been obtained. When $X$ and $Y$ are of infinite--dimensional Banach spaces, the perturbation analysis
of $A_{T,S}^{(2)}$ for small perturbation of $T$, $S$ and $A$ has been done in \cite{DX}.

In this paper, we assume that $X$ and $Y$ are all Hilbert spaces over the complex field $\mathbb C$. Using the theory of
stable perturbation of generalized inverses established by G. Chen and Y. Xue in \cite{CX, CWX}, we
will give the upper bounds of $\|\bar A_{T^\prime,S^\prime}^{(2)}\|$ and $\|\bar A_{T^\prime,S^\prime}^{(2)}-A_{T,S}^{(2)}\|$
respectively for certain $T'$, $S'$ and $\bar A$. The results in this paper improve \cite[Theorem 4.4.7]{XUE}.

\section{Preliminaries}
\setcounter{equation}{0}

Let $H$ be a complex Hilbert space. Let $V$ be a closed subspace of $H$. We denote by $P_V$ the orthogonal projection of
$H$ onto $V$. Let $M,\,N$ be two closed subspaces in $H$. Set
$$
\delta(M,N)=\begin{cases}\sup\{dist(x,N)\,\vert\,x\in M,\,\|x\|=1\},\quad &M\not=\{0\}\\ 0 \quad& M=\{0\}
\end{cases},
$$
where $dist(x,N)=\inf\{\|x-y\|\,\vert\,y\in N\}$. The gap $\hat\delta(M,N)$ of $M,\,N$ is given by
$\hat{\delta}(M,N)=\max\{\delta(M,N),\delta(N,M)\}$. For convenience, we list some properties about $\delta(M,N)$
and $\hat\delta(M,N)$ which come from \cite{TK} as follows.

\begin{proposition}[\text{\cite{TK}}]\label{2P1}
Let $M,\,N$ be closed subspaces in a Hilbert space $H$.
\begin{enumerate}
  \item[$(1)$] $\delta(M,N)=0$ if and only if $M\subset N$
  \item[$(2)$] $\hat{\delta}(M,N)=0$ if and only if $M=N$
  \item[$(3)$] $\hat{\delta}(M,N)=\hat{\delta}(N,M)$
  \item[$(4)$] $0\leq \delta(M,N)\leq 1$, $0\leq \hat{\delta}(M,N)\leq 1$
   \item[$(5)$] $\hat{\delta}(M,N)=\|P_M-Q_N\|$.
\end{enumerate}
\end{proposition}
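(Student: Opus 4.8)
The plan is to dispatch (1)--(4) by directly unwinding the definitions and to reserve the real work for (5). For (1): when $M=\{0\}$ both assertions are trivial; when $M\neq\{0\}$, note that $dist(x,N)$ is homogeneous of degree one in $x$, so $\delta(M,N)=0$ is equivalent to $dist(x,N)=0$ for every $x\in M$, which --- since $N$ is closed --- says exactly that $M\subset N$. Item (2) then follows: $\hat\delta(M,N)=0$ iff $\delta(M,N)=\delta(N,M)=0$ iff $M\subset N$ and $N\subset M$. Item (3) is immediate from the symmetry of $\max$ in the definition of $\hat\delta$, and (4) holds because $0\in N$ forces $0\le dist(x,N)\le\|x\|=1$ for every unit $x\in M$, so $0\le\delta(M,N)\le1$ and hence $0\le\hat\delta(M,N)\le1$.

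For (5), the first step is to rewrite the one-sided gap through projections: I would show
\[
\delta(M,N)=\|(I-Q_N)P_M\|,
\]
with the convention $P_M=0$ when $M=\{0\}$. This uses that the orthogonal projection is the best approximation, so for a unit vector $x\in M$ one has $dist(x,N)=\|x-Q_Nx\|=\|(I-Q_N)P_Mx\|$ (because $P_Mx=x$); taking the supremum over the unit sphere of $M$ and invoking homogeneity identifies $\delta(M,N)$ with the operator norm of $(I-Q_N)P_M$, the degenerate case $M\subset N$ being checked separately since then both sides vanish.

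Next I would set $A=P_M-Q_N$ (self--adjoint) and prove $\hat\delta(M,N)=\|A\|$ by two inequalities. The easy one: $(I-Q_N)P_M=AP_M$ and $(I-P_M)Q_N=-AQ_N$, so by the previous step $\delta(M,N)=\|AP_M\|\le\|A\|$ and $\delta(N,M)=\|AQ_N\|\le\|A\|$, giving $\hat\delta(M,N)\le\|A\|$. For the reverse, take a unit $x\in H$, split $x=u+v$ with $u=P_Mx\in M$ and $v=(I-P_M)x\in M^{\perp}$, and compute $Ax=(I-Q_N)u-Q_Nv$; the two summands lie in $N^{\perp}$ and $N$ respectively, hence are orthogonal, so $\|Ax\|^2=\|(I-Q_N)u\|^2+\|Q_Nv\|^2$. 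Then $\|(I-Q_N)u\|=\|(I-Q_N)P_Mu\|\le\delta(M,N)\|u\|$ and $\|Q_Nv\|=\|Q_N(I-P_M)v\|\le\|Q_N(I-P_M)\|\,\|v\|=\|(I-P_M)Q_N\|\,\|v\|=\delta(N,M)\|v\|$, the middle equality being that an operator and its adjoint have equal norm. Hence $\|Ax\|^2\le\delta(M,N)^2\|u\|^2+\delta(N,M)^2\|v\|^2\le\hat\delta(M,N)^2(\|u\|^2+\|v\|^2)=\hat\delta(M,N)^2$, so $\|A\|\le\hat\delta(M,N)$.

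The step I expect to be the main obstacle is the reverse inequality in (5): a crude triangle--inequality estimate of $\|Ax\|$ only yields $\delta(M,N)+\delta(N,M)$, so the point is to recognize that the two pieces of $Ax$ are mutually orthogonal (so their norms combine by Pythagoras, not by addition) and to use the adjoint identity $\|Q_N(I-P_M)\|=\|(I-P_M)Q_N\|$ to see the second piece as controlled by $\delta(N,M)$ rather than a wrong--sided quantity. The remaining manipulations are routine projection algebra.
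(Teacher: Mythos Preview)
Your proof is correct and self--contained. Note, however, that the paper does not supply its own proof of this proposition: it is stated as a list of properties quoted from Kato's monograph \cite{TK}, so there is no argument in the paper to compare against. What you have written is precisely the standard projection--algebra proof one finds (in various degrees of detail) in that reference; the key observations --- that $\delta(M,N)=\|(I-Q_N)P_M\|$ via best approximation, and that the two pieces of $(P_M-Q_N)x$ land in $N^\perp$ and $N$ so Pythagoras replaces the triangle inequality --- are exactly the ones needed, and you have identified the only genuinely non--obvious step. Nothing is missing.
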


Let $A\in B(X,Y)$. If there is $C\in B(Y,X)$ such that $ACA=A$ and $CAC=C$, we call $C$ is a generalized inverse of $A$
and is denoted by $A_{GI}^+$. In this case, $R(A)$ is closed in $Y$.

Recall that $A$ is Moore--Penrose invertible, if there is $B\in B(Y,X)$ such that
\begin{equation}\label{2eqa}
ABA=A,\ BAB=B,\ (AB)^*=AB,\ (BA)^*=BA.
\end{equation}
The operator $B$ in (\ref{2eqa}) is called the Moore--Penrose inverse of $A$ and is denoted as $A^+$.
It is well--known that $A$ is Moore--Penrose invertible iff $R(A)$ is closed in $Y$. Thus, $A$ is Moore--Penrose
invertible iff $A_{GI}^+$ exists.

Let $A,\,\delta A\in B(X,Y)$ and put $\bar A=A+\delta A$. Recall that $\bar A$ is the stable perturbation of
$A$ if $R(\bar A)\cap R(A)^\perp=\{0\}$.

The next lemma illustrates some equivalent conditions of the stable perturbation.
\begin{lemma}[\cite{XCC, DX1}]\label{2L1}
Let $A\in B(X,Y)$ with $R(A)$ closed and $\delta A \in B(X,Y)$ with $\|A^+\|\|\delta A\|<1$. Put $\bar{T}=T+\delta T$.

$(A)$ The following conditions are equivalent.
\begin{enumerate}
\item[$(1)$] $R(\bar{A})\cap R(A)^\perp=\{0\}$
\item[$(2)$] $N(\bar{A})^\perp \cap N(A)=\{0\}$
\item[$(3)$] $R(\bar{A})$ is closed and $\bar{A}^+_{GI}=A^+(I+\delta AA^+)^{-1}=(I+A^+\delta A)^{-1}A^+$
\end{enumerate}

$(B)$ If $\bar A$ is the stable perturbation of $A$, then $R(\bar A)$ is closed and
$$
\|\bar A^+\|\leq\frac{\|A^+\|}{1-\|A^+\|\|\delta A\|},\
\|\bar A^+-A^+\|\leq \frac{1+\sqrt{5}}{2}\|\bar{A}^+\|\|A^+\|\|\delta A\|.
$$
\end{lemma}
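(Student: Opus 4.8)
The plan is to prove (A) by exhibiting an explicit generalized inverse of $\bar A$, and to deduce (B) from that formula. Since $\|A^+\|\,\|\delta A\|<1$, the operators $I+\delta A A^+\in B(Y)$ and $I+A^+\delta A\in B(X)$ are invertible by Neumann series, so $B:=A^+(I+\delta A A^+)^{-1}=(I+A^+\delta A)^{-1}A^+$ is well defined (the two forms agree since $A^+(I+\delta A A^+)=(I+A^+\delta A)A^+$). First I would check, with no extra hypothesis, that $B\bar A B=B$: writing $\bar A=A+\delta A$ and using $A^+(I-AA^+)=0$ together with $\delta A A^+(I-AA^+)=0$ (hence $(I+\delta A A^+)^{-1}(I-AA^+)=I-AA^+$), all the unwanted terms cancel. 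Then $\bar A B=I-(I-AA^+)(I+\delta A A^+)^{-1}$ is idempotent, and one checks that $N(\bar A B)=R(A)^{\perp}$; the only input needed is $N(\bar A)\cap N(A)^{\perp}=\{0\}$, which is immediate from $\|A^+\delta A\|<1$ (if $\bar A u=0$ and $u\perp N(A)$ then $u=-A^+\delta A u$, so $u=0$). Thus $\bar A B$ is a bounded idempotent with kernel $R(A)^{\perp}$ and range inside $R(\bar A)$; a short closed-subspace argument gives $R(\bar A B)=R(\bar A)$ — equivalently, $R(\bar A)$ closed, $\bar A B\bar A=\bar A$, and $\bar A^{+}_{GI}=B$ — \emph{exactly} when $R(\bar A)\cap R(A)^{\perp}=\{0\}$, which is the equivalence $(1)\Leftrightarrow(3)$. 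For $(1)\Leftrightarrow(2)$ I would run the same construction for $\bar A^{*}=A^{*}+\delta A^{*}$ (legitimate, since $\|(A^{*})^{+}\|\,\|\delta A^{*}\|=\|A^+\|\,\|\delta A\|<1$ and $R(A^{*})$ is closed): condition $(1)$ for $\bar A^{*}$ reads $R(\bar A^{*})\cap N(A)=\{0\}$, which is $(2)$ for $\bar A$ because $\overline{R(\bar A^{*})}=N(\bar A)^{\perp}$ always, with equality once $R(\bar A^{*})$ is closed; and taking adjoints of the defining identities of $B$ shows that $(3)$ for $\bar A$ and $(3)$ for $\bar A^{*}$ are the same condition.

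For (B): from (3) we have $\bar A=\bar A B\bar A$ with $R(\bar A)$ closed, so $\bar A^{+}$ exists and $\bar A^{+}=\bar A^{+}\bar A\bar A^{+}=(\bar A^{+}\bar A)\,B\,(\bar A\bar A^{+})$; since $\bar A^{+}\bar A$ and $\bar A\bar A^{+}$ are orthogonal projections, $\|\bar A^{+}\|\le\|B\|\le\|A^+\|\,\|(I+\delta A A^+)^{-1}\|\le\|A^+\|/(1-\|A^+\|\,\|\delta A\|)$, which is the first bound. For the second I would start from the elementary Wedin-type identity
\[
\bar A^{+}-A^{+}=-\bar A^{+}(\delta A)A^{+}+\bar A^{+}(I-AA^{+})-(I-\bar A^{+}\bar A)A^{+},
\]
a rearrangement of $\bar A(\bar A-A-\delta A)=0$, and then exploit that its three summands $T_1,T_2,T_3$ split orthogonally: $T_3$ has range in $N(\bar A)$ while $T_1,T_2$ have range in $N(\bar A)^{\perp}$, and $T_1=T_1 P_{R(A)}$, $T_2=T_2 P_{R(A)^{\perp}}$, so Cauchy--Schwarz gives $\|\bar A^{+}-A^{+}\|\le(\|T_1\|^{2}+\|T_2\|^{2}+\|T_3\|^{2})^{1/2}$. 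Here $\|T_1\|\le\|\bar A^{+}\|\,\|\delta A\|\,\|A^+\|$ is immediate, while $\|T_2\|\le\|\bar A^{+}\|\,\delta(R(\bar A),R(A))$ and $\|T_3\|\le\delta(N(\bar A),N(A))\,\|A^+\|$; the two gaps are $O(\|\delta A\|)$ by elementary estimates ($(I-AA^{+})\bar A=(I-AA^{+})\delta A$ and $A P_{N(\bar A)}=-\delta A P_{N(\bar A)}$, using that (3) makes $R(\bar A)$ closed so these quantities are finite), and the sharper bounds refining the Neumann denominators come from the explicit formula $\bar A^{+}_{GI}=A^{+}(I+\delta A A^{+})^{-1}$. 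Collecting the three estimates and re-expressing through the first bound of (B) produces the constant $\tfrac{1+\sqrt5}{2}$.

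I expect the last step to be the main obstacle: the algebra in (A) and the decomposition $\bar A^{+}=(\bar A^{+}\bar A)B(\bar A\bar A^{+})$ are routine once $B$ has been written down, and the gap estimates are standard, but combining the three contributions so that the constant is \emph{exactly} $\tfrac{1+\sqrt5}{2}$ — rather than something cruder — requires the orthogonal-splitting observation together with a small optimisation exploiting $\varphi^{2}=\varphi+1$ for $\varphi=\tfrac{1+\sqrt5}{2}$.
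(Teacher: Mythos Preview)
The paper does not prove this lemma at all; it is quoted from \cite{XCC,DX1} as a preliminary tool, so there is no ``paper's own proof'' to compare against. That said, your outline for part~(A) and for the first inequality in~(B) is correct and is essentially the argument one finds in the cited references: the candidate $B=A^{+}(I+\delta A A^{+})^{-1}$, the verification $B\bar A B=B$, the identification $N(\bar A B)=R(A)^{\perp}$ via $N(\bar A)\cap N(A)^{\perp}=\{0\}$, and the duality trick for $(1)\Leftrightarrow(2)$ are all standard and sound; likewise $\bar A^{+}=(\bar A^{+}\bar A)B(\bar A\bar A^{+})$ gives the first norm bound immediately.

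For the second inequality in~(B) your strategy --- Wedin's three--term identity plus the two orthogonality observations --- is the right one, and your claimed bound $\|\bar A^{+}-A^{+}\|^{2}\le\|T_{1}\|^{2}+\|T_{2}\|^{2}+\|T_{3}\|^{2}$ is in fact true (the associated $2\times2$ quadratic form has both eigenvalues nonnegative, so its largest eigenvalue is bounded by its trace). The genuine gap is the final step: with the naive estimates $\|T_{1}\|\le\|\bar A^{+}\|\,\|A^{+}\|\,\|\delta A\|$, $\|T_{2}\|\le\|\bar A^{+}\|^{2}\|\delta A\|$, $\|T_{3}\|\le\|A^{+}\|^{2}\|\delta A\|$ one does \emph{not} obtain the constant $\tfrac{1+\sqrt5}{2}$ --- the resulting inequality $r^{4}-\varphi r^{2}+1\le0$ (with $r=\|\bar A^{+}\|/\|A^{+}\|$) is never satisfied. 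The proofs in \cite{CWX,XCC} reach $\varphi$ by a different combination: they keep $\|T_{2}\|,\|T_{3}\|\le\|\bar A^{+}-A^{+}\|$ (from $T_{2}=(\bar A^{+}-A^{+})(I-AA^{+})$ and $T_{3}=(I-\bar A^{+}\bar A)(\bar A^{+}-A^{+})$) and exploit the sharper eigenvalue bound $\lambda_{\max}=\tfrac12\big(a^{2}+b^{2}+c^{2}+\sqrt{(a^{2}+b^{2}+c^{2})^{2}-4b^{2}c^{2}}\,\big)$ rather than the trace bound, which together yield a quadratic inequality in $D=\|\bar A^{+}-A^{+}\|$ whose positive root is exactly $\varphi\,\|\bar A^{+}\|\,\|A^{+}\|\,\|\delta A\|$. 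Your remark that ``a small optimisation exploiting $\varphi^{2}=\varphi+1$'' is needed is therefore correct in spirit, but the optimisation has to be done on the self--referential bounds $\|T_{2}\|,\|T_{3}\|\le D$, not on the gap estimates you wrote down.
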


\begin{lemma}\label{2L2}
Let $A \in B(X,Y)$ with $R(A)$ closed. If $Z\in B(Y,X)$ satisfies the conditions:
$AZA=A$ and $ZAZ=Z$, then $A^+=P_{N(A)^\perp}ZP_{R(A)}$.
\end{lemma}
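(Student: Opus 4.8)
The plan is to verify directly that the operator $B := P_{N(A)^\perp}ZP_{R(A)}$ satisfies the four Moore--Penrose equations in \eqref{2eqa}, using the hypotheses $AZA=A$ and $ZAZ=Z$ together with the fact that $R(A)$ is closed (so that $A^+$ exists and the orthogonal projections $P_{R(A)}$ and $P_{N(A)^\perp}$ are bounded). The two structural facts I would extract first are: (i) since $AZA=A$, the operator $AZ$ is an idempotent with range $R(A)$, hence $P_{R(A)}AZ = AZ$ and $AZ P_{R(A)} = P_{R(A)} AZ P_{R(A)}$; and dually $ZA$ is an idempotent with $N(ZA)\supseteq N(A)$ actually $N(ZA)=N(A)$ because $AZA=A$ forces $N(ZA)\subseteq N(A)$, so $ZAP_{N(A)^\perp}=ZA$ and $ZA$ restricted appropriately behaves well against $P_{N(A)^\perp}$. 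The key computational identity I expect to use repeatedly is $AP_{N(A)^\perp}=A$ (because $N(A)^\perp$ is a complement of $N(A)$) and $P_{R(A)}A=A$.

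First I would show $ABA=A$: compute $ABA = A P_{N(A)^\perp} Z P_{R(A)} A = A Z A = A$, using $AP_{N(A)^\perp}=A$ and $P_{R(A)}A=A$. Next, $BAB = P_{N(A)^\perp}ZP_{R(A)}AP_{N(A)^\perp}ZP_{R(A)} = P_{N(A)^\perp}ZAZP_{R(A)} = P_{N(A)^\perp}ZP_{R(A)} = B$, again collapsing the inner projections against $A$ and then invoking $ZAZ=Z$. For the symmetry conditions: $AB = AP_{N(A)^\perp}ZP_{R(A)} = AZP_{R(A)}$. Since $AZ$ is idempotent with range $R(A)$, one checks $R(AZP_{R(A)})\subseteq R(A)$ and that $AZP_{R(A)}$ acts as the identity on $R(A)$ and as zero on $R(A)^\perp$ — the latter because $P_{R(A)}$ kills $R(A)^\perp$; hence $AB=P_{R(A)}$, which is self-adjoint. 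Similarly $BA = P_{N(A)^\perp}ZP_{R(A)}A = P_{N(A)^\perp}ZA$; using $ZAP_{N(A)^\perp}=ZA$ and that $ZA$ is an idempotent with kernel $N(A)$, one shows $BA$ is the identity on $N(A)^\perp$ and zero on $N(A)$, so $BA=P_{N(A)^\perp}$, self-adjoint. By uniqueness of the Moore--Penrose inverse, $B=A^+$.

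The main obstacle is the careful bookkeeping in the two symmetry verifications: one must argue that $AZP_{R(A)}$ really equals $P_{R(A)}$ (and not merely an idempotent with range $R(A)$), which requires noting that $AZ$ fixes $R(A)$ pointwise — this follows since for $y=Ax\in R(A)$ we have $AZy = AZAx = Ax = y$ — combined with the fact that precomposing with $P_{R(A)}$ annihilates $R(A)^\perp$. The dual argument for $BA=P_{N(A)^\perp}$ is symmetric once one establishes $N(ZA)=N(A)$, which again comes straight from $AZA=A$. Everything else is routine substitution, so no delicate estimates or limiting arguments are needed — the closedness of $R(A)$ is used only to guarantee that the orthogonal projections involved are bounded operators and that $A^+$ exists so the uniqueness step applies.
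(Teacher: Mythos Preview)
Your proposal is correct and follows exactly the route the paper indicates: the paper's proof consists of the single sentence ``We can check that $P_{N(A)^\perp}ZP_{R(A)}$ satisfies the definition of the Moore--Penrose inverse of $A$,'' and your argument is precisely that verification carried out in detail. The identities $AP_{N(A)^\perp}=A$, $P_{R(A)}A=A$ and the observation that $AZ$ (resp.\ $ZA$) is an idempotent with range $R(A)$ (resp.\ kernel $N(A)$) are exactly what one needs, and your handling of the symmetry conditions---showing $AB=P_{R(A)}$ and $BA=P_{N(A)^\perp}$ by checking their action on the two orthogonal summands---is the natural way to complete the check.
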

\begin{proof} We can check that $P_{N(A)^\perp}ZP_{R(A)}$ satisfies the definition of the Moore--Penrose inverse of $A$.
\qed
\end{proof}

The following result is known when $X,\,Y$ are all of finite--dimensional (cf. \cite{BG}).
\begin{lemma}\label{2L3}
Let $A \in B(X,Y)$ and $T\subset X ,S\subset Y$ be closed subspaces. If $A_{T,S}^{(2)}$ exists, then
$A_{T,S}^{(2)}=(P_{S^\perp}AP_T)^+$ with $R(A_{T,S}^{(2)})=T$ and $N(A_{T,S}^{(2)})=S$.
\end{lemma}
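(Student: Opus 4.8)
The plan is to show that the operator $G = (P_{S^\perp}AP_T)^+$ coincides with $A_{T,S}^{(2)}$ by verifying the three defining conditions $GAG = G$, $R(G) = T$ and $N(G) = S$ from \eqref{1eqa}, using the characterization \eqref{1eqb} that is available because $A_{T,S}^{(2)}$ is assumed to exist. Write $B = P_{S^\perp}AP_T \in B(X,Y)$. The first thing to establish is that $R(B)$ is closed, so that $B^+$ exists as a bounded operator; this should follow from \eqref{1eqb}, since $AT \dotplus S = Y$ forces $AT$ to be closed and $P_{S^\perp}$ restricted to $AT$ to be bounded below modulo $N(A)\cap T = \{0\}$, giving $R(B) = P_{S^\perp}(AT)$ closed. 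Once $B^+$ is in hand, I would identify $R(B)$, $N(B)$, $R(B^+) = N(B)^\perp$ and $N(B^+) = R(B)^\perp$ explicitly in terms of $T$ and $S$.

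The key computations are the following. First, $N(B) = \{x : P_{S^\perp}AP_T x = 0\}$; decomposing $x$ along $T \oplus T^\perp$ and using $N(A)\cap T = \{0\}$ together with $AT \cap S = \{0\}$ (a consequence of $AT \dotplus S = Y$), one gets $N(B) = T^\perp$, hence $N(B)^\perp = T$ and therefore $R(B^+) = T$. Second, $R(B) = P_{S^\perp}(AT)$; I claim this equals $S^\perp$. Indeed $S^\perp$ and $P_{S^\perp}(AT)$ have the same dimension-type relation because $AT \dotplus S = Y$ implies $P_{S^\perp}|_{AT}$ is onto $S^\perp$, so $R(B) = S^\perp$ and $N(B^+) = R(B)^\perp = S$. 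These two facts already give $R(G) = R(B^+) = T$ and $N(G) = N(B^+) = S$, matching two of the three conditions in \eqref{1eqa}.

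It remains to check $GAG = G$, i.e. $B^+ A B^+ = B^+$. Here I would use that $R(B^+) = T$, so for any $y$ the element $B^+ y$ lies in $T$ and hence $P_T B^+ y = B^+ y$; also $B^+ = B^+ P_{R(B)} = B^+ P_{S^\perp}$ since $N(B^+) = S = (S^\perp)^\perp = R(B)^\perp$. Therefore $B^+ A B^+ = B^+ P_{S^\perp} A P_T B^+ = B^+ B B^+ = B^+$, where the last equality is one of the Moore–Penrose identities for $B$. This closes the argument, and by uniqueness of the operator satisfying \eqref{1eqa} we conclude $G = A_{T,S}^{(2)}$; the stated identities $R(A_{T,S}^{(2)}) = T$ and $N(A_{T,S}^{(2)}) = S$ are then immediate.

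The main obstacle I anticipate is the closedness of $R(B)$ and the precise identification $R(B) = S^\perp$: both rest on turning the algebraic direct-sum decomposition $AT \dotplus S = Y$ (which a priori involves a possibly non-orthogonal, merely topological complement) into quantitative control of $P_{S^\perp}$ restricted to $AT$. One clean way around this is to invoke Lemma \ref{2L1} or the existence of a bounded idempotent with range $AT$ and kernel $S$, guaranteed by $AT \dotplus S = Y$ with $AT$ closed, and then compare it with $P_{S^\perp}$; alternatively one can argue directly that $P_{S^\perp}|_{AT} : AT \to S^\perp$ is a bounded bijection, hence a topological isomorphism by the open mapping theorem, which delivers both the closedness of $R(B)$ and the equality $R(B) = S^\perp$ simultaneously. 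Everything else is routine manipulation with orthogonal projections and the Moore–Penrose identities.
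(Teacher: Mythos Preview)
Your proposal is correct and follows essentially the same route as the paper: identify $N(P_{S^\perp}AP_T)=T^\perp$ and $R(P_{S^\perp}AP_T)=S^\perp$, deduce $R(B^+)=T$, $N(B^+)=S$, and then insert the projections $P_T$ and $P_{S^\perp}$ to convert $B^+AB^+$ into $B^+BB^+=B^+$. The only minor technical difference is that the paper handles the closedness and surjectivity step by explicitly manipulating the oblique idempotent $P$ onto $S$ along $AT$ (via the identities $(I-P)(I-P_S)=I-P$ and $(I-P_S)(I-P)=I-P_S$), whereas you propose the equivalent route of showing $P_{S^\perp}|_{AT}\colon AT\to S^\perp$ is a bounded bijection and invoking the open mapping theorem; either works.
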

\begin{proof} The existence of $A_{T,S}^{(2)}$ implies that $N(A)\cap T=\{0\}$, $AT$ is closed and $Y=AT\dotplus S$.
Let $P\colon Y\rightarrow S$ be the idempotent operator. Since $R(P)=S$ and $R(I_Y-P)=AT$, it
follows that $PP_S=P_S$, $P_SP=P$ and $(I_Y-P)AT=AT$. Noting that
\begin{align*}
(I_Y-P)(I_Y-P_S)&=I_Y+PP_S-P_S-P=I_Y-P\\
(I_Y-P_S)(I_Y-P)&=I_Y-P-P_S+P_SP=I_Y-P_S,
\end{align*}
we have
$$
R(I_Y-P_S)=(I_Y-P_S)(R(I_Y-P))=(I_Y-P_S)AT=P_{S^\perp}AT
$$
and hence $R(P_{S^{\bot}}AP_T)=R(P_{S^{\bot}})=S^\perp$ is closed.

Let $x\in T$ and $P_{S^{\bot}}Ax=0$. Then $(I_Y-P)Ax=Ax,\ Ax=P_SAx$ and hence
$0=PAx=PP_SAx=P_SAx=Ax$. Since $N(A)\cap T=\{0\}$, we have $x=0$ and consequently,
$N(P_{S^{\bot}}AP_T)=T^{\bot}$. Therefore, $(P_{S^{\bot}}AP_T)^+$ exists and
\begin{align}
\label{2eqb}R((P_{S^{\bot}}AP_T)^+)&=(N(P_{S^{\bot}}AP_T))^{\bot}=T\\
\label{2eqc}N((P_{S^{\bot}}AP_T)^+)&=(R(P_{S^{\bot}}AP_T))^{\bot}=S.
\end{align}
Since
$$
(P_{S^\perp}AP_T)^+ P_{S^{\bot}}=(P_{S^{\bot}}AP_T)^+=P_T(P_{S^{\bot}}AP_T)^+,
$$
by (\ref{2eqb}) and (\ref{2eqc}), it follows that
\begin{align*}
(P_{S^{\bot}}AP_T)^+&=(P_{S^{\bot}}AP_T)^+(P_{S^{\bot}}AP_T)(P_{S^{\bot}}AP_T)^+\\
&=(P_{S^{\bot}}AP_T)^+ A(P_{S^{\bot}}AP_T)^+
\end{align*}
and so that $A_{T,S}^{(2)}=(P_{S^{\bot}}AP_T)^+$.\qed
\end{proof}

\begin{lemma}[{\cite[Theorem 11,P100]{VM}}]\label{2L4}
Let $M$ be a complemented subspace of $H$. Let $P\in B(H)$ be an idempotent operator
with $R(P)=M$. Let $M^\prime$ be a closed subspace of $H$ satisfying
$\hat{\delta}(M,M^\prime)<\dfrac{1}{1+\|P\|}$. Then $M^\prime$ is complemented, that is,
$H=R(I-P)\dotplus M'$.
\end{lemma}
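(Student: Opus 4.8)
\medskip
\noindent\emph{Proof strategy.} Denote by $P_M$ and $P_{M'}$ the orthogonal projections of $H$ onto $M$ and $M'$, and put $\gamma=\hat\delta(M,M')$; by Proposition~\ref{2P1}(5) we have $\gamma=\|P_M-P_{M'}\|$, so the hypothesis reads $(1+\|P\|)\gamma<1$ (in particular also $\|P\|\gamma<1$). The plan is to show that the restriction $\phi:=P|_{M'}\colon M'\to M$ is a bounded linear isomorphism; once this is known, the splitting $H=R(I-P)\dotplus M'$ follows at once. First I would record the elementary identities $PP_M=P_M$, $P_MP=P$ and $(I-P)P_M=0$ (all immediate from $R(P)=M$, since then $Px=x$ for $x\in M$), and note that $R(I-P)=N(P)$ is closed.

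Next I would prove the two estimates that make $\phi$ bijective. For injectivity with closed range: since $(P-I)(I-P_M)=P-I$ and, for $x'\in M'$, $(I-P_M)x'=(P_{M'}-P_M)x'$, one obtains $Px'-x'=(P-I)(P_{M'}-P_M)x'$, whence $\|\phi x'\|=\|Px'\|\ge\bigl(1-(1+\|P\|)\gamma\bigr)\|x'\|>0$ for $x'\neq0$, so $\phi$ is bounded below. For surjectivity onto $M$: consider $g:=\phi\circ(P_{M'}|_M)\colon M\to M$; using $P_{M'}z=z-(P_M-P_{M'})z$ and $Pz=z$ for $z\in M$, one gets $g=I_M-h$ with $h(z)=P(P_M-P_{M'})z$ and $\|h\|\le\|P\|\gamma<1$, so $g$ is invertible on $M$; as $g$ factors through $\phi$, the map $\phi$ is onto $M$. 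By the open mapping theorem ($M$ and $M'$ being closed, hence complete), $\phi$ is a topological isomorphism.

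To conclude, if $x'\in M'\cap R(I-P)=M'\cap N(P)$, then $\phi(x')=Px'=0$, hence $x'=0$; so $M'\cap R(I-P)=\{0\}$. Given any $y\in H$, we have $Py\in R(P)=M$, so surjectivity of $\phi$ gives $x'\in M'$ with $Px'=Py$, and then $y-x'\in N(P)=R(I-P)$, so $y=x'+(y-x')\in M'+R(I-P)$. Therefore $H=R(I-P)\dotplus M'$, i.e.\ $M'$ is complemented. (Equivalently, $\phi^{-1}\circ P$, regarded as an operator on $H$, is precisely the idempotent with range $M'$ and kernel $N(P)$.)

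I expect the only non-routine step to be the surjectivity of $\phi$: being bounded below does not by itself force surjectivity in infinite dimensions, so the Neumann-series observation that $g=I_M-h$ is invertible is genuinely needed, together with careful use of the three idempotent identities above. Everything else is bookkeeping; note also that the degenerate cases $M=\{0\}$ and $M=H$ are absorbed by the same formulas, since the hypothesis then forces $M'=M$.
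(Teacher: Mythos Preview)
The paper does not prove this lemma at all: it is quoted verbatim from M\"uller's book \cite{VM} and used as a black box, so there is no ``paper's own proof'' to compare against. Your argument is self-contained and correct; the key steps --- the lower bound $\|Px'\|\ge\bigl(1-(1+\|P\|)\gamma\bigr)\|x'\|$ from $(P-I)(I-P_M)=P-I$, and surjectivity via the Neumann series for $g=I_M-P(P_M-P_{M'})|_M$ on $M$ --- are exactly the right ideas, and the final assembly of the direct sum from bijectivity of $\phi=P|_{M'}$ is clean.

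Two minor remarks. First, in a Hilbert space one actually has $\|I-P\|=\|P\|$ for any nontrivial idempotent, so your injectivity bound could be sharpened to $\|Px'\|\ge(1-\|P\|\gamma)\|x'\|$; but since the hypothesis is stated with the constant $\dfrac{1}{1+\|P\|}$ (presumably because M\"uller works in Banach spaces, where only $\|I-P\|\le 1+\|P\|$ is available), your estimate matches the statement perfectly. Second, the appeal to the open mapping theorem is not strictly necessary: once $\phi$ is bounded below and surjective, boundedness of $\phi^{-1}$ is immediate from the lower bound itself.
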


\section{main result}
\setcounter{equation}{0}

We begin with the key lemma as follows.
\begin{lemma}\label{P1}
Let $A \in B(X,Y)$. Let $T\subset X$ and $S\subset Y$ be closed subspaces such that $A_{T,S}^{(2)}$ exists.
Let $T'$ be a closed subspace of $X$ such that $\hat{\delta}(T,T^\prime)< \dfrac{1}{1+\|A\|\|A_{T,S}^{(2)}\|}$.
Then
$$
\hat{\delta}(AT,AT^\prime)\leq \frac{\|A\|\|A_{T,S}^{(2)}\|\hat{\delta}(T,T^\prime)}{1-(1+\|A\|\|A_{T,S}^{(2)}\|)
\hat{\delta}(T,T^\prime)}.
$$
\end{lemma}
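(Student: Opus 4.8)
The plan is to abbreviate $G=A_{T,S}^{(2)}$ and base everything on the identity $GA|_{T}=I_{T}$, which follows from $GAG=G$ and $R(G)=T$: every $x\in T$ has the form $x=Gw$, whence $GAx=GAGw=Gw=x$. Dually, for $y\in AT$, writing $y=Ax$ with $x\in T$ gives $AGy=AGAx=A(GAx)=Ax=y$. I would estimate $\delta(AT,AT')$ and $\delta(AT',AT)$ separately, because the subspace $T$ enjoys the one-sided bound $\|x\|=\|GAx\|\le\|G\|\,\|Ax\|$ for $x\in T$ while $T'$ carries no such estimate, so the two directions are not symmetric.

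For $\delta(AT,AT')$: given $y\in AT$ with $\|y\|=1$, put $x=Gy\in T$, so $Ax=AGy=y$ and $\|x\|\le\|G\|$. Taking $x'=P_{T'}x\in T'$ gives $\|x-x'\|=dist(x,T')\le\delta(T,T')\|x\|\le\hat{\delta}(T,T')\|G\|$, and then $Ax'\in AT'$ with $\|y-Ax'\|=\|A(x-x')\|\le\|A\|\,\|G\|\,\hat{\delta}(T,T')$. Hence $\delta(AT,AT')\le\|A\|\,\|G\|\,\hat{\delta}(T,T')$, which already lies below the asserted bound.

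The crux is $\delta(AT',AT)$, where a bootstrap is needed. Given $y'\in AT'$ with $\|y'\|=1$, pick any $x'\in T'$ with $Ax'=y'$ and set $x=P_{T}x'\in T$, so $\|x-x'\|\le\hat{\delta}(T,T')\|x'\|$. Since $x\in T$, $\|x\|=\|GAx\|\le\|G\|\,\|Ax\|\le\|G\|\bigl(\|Ax'\|+\|A\|\,\|x-x'\|\bigr)\le\|G\|+\|A\|\,\|G\|\,\hat{\delta}(T,T')\|x'\|$, while also $\|x'\|\le\|x\|+\hat{\delta}(T,T')\|x'\|$. Eliminating $\|x\|$ and collecting the $\|x'\|$ terms yields $\bigl(1-(1+\|A\|\,\|G\|)\hat{\delta}(T,T')\bigr)\|x'\|\le\|G\|$; the hypothesis $\hat{\delta}(T,T')<(1+\|A\|\,\|G\|)^{-1}$ keeps the coefficient positive, so $\|x'\|\le\|G\|\bigl(1-(1+\|A\|\,\|G\|)\hat{\delta}(T,T')\bigr)^{-1}$. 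Then $Ax\in AT$ and $\|y'-Ax\|=\|A(x'-x)\|\le\|A\|\,\hat{\delta}(T,T')\|x'\|$, giving $\delta(AT',AT)\le\|A\|\,\|G\|\,\hat{\delta}(T,T')\bigl(1-(1+\|A\|\,\|G\|)\hat{\delta}(T,T')\bigr)^{-1}$.

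Since $1-(1+\|A\|\,\|G\|)\hat{\delta}(T,T')\le1$, the first bound is dominated by the second, so $\hat{\delta}(AT,AT')=\max\{\delta(AT,AT'),\delta(AT',AT)\}$ does not exceed the right-hand side of the claim. The one genuinely non-routine step is the bootstrapping estimate on $\|x'\|$ in the third paragraph; everything else is the triangle inequality together with $GA|_{T}=I_{T}$. (Note that $AT$ is closed, since $AT\dotplus S=Y$; closedness of $AT'$ is not needed here because $\delta$ is defined for arbitrary subspaces.)
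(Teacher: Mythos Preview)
Your proof is correct and follows essentially the same approach as the paper's. Both arguments estimate $\delta(AT,AT')$ and $\delta(AT',AT)$ separately, using the identity $GAx=x$ for $x\in T$ (equivalently $\|x\|\le\|G\|\,\|Ax\|$) and a bootstrap/rearrangement to control $\|x'\|$ in terms of $\|Ax'\|$ for $x'\in T'$; your normalization $\|y'\|=1$ and use of the explicit orthogonal projections $P_T,P_{T'}$ is just a cosmetic repackaging of the paper's chain of inequalities leading to its display~(3.1), and the resulting bounds coincide.
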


\begin{proof} First we show $\delta(AT,AT^\prime)\leq \|A\|\|A_{T,S}^{(2)}\|\delta(T,T^\prime)\leq
\|A\|\|A_{T,S}^{(2)}\|\hat\delta(T,T^\prime)$.

Let $x\in T$. Then $x=A_{T,S}^{(2)}Ax$ and $\|x\|\leq\|A_{T,S}^{(2)}\|\|Ax\|$. For any $y\in T^\prime$, we have
$\|Ax-Ay\|\leq \|A\|\|x-y\|$. So
\begin{align*}
dist(Ax,AT^\prime)&=\inf_{y\in T^\prime}\|Ax-Ay\|\leq \|A\|\inf_{y\in T^\prime}\|x-y\|\\
&=\|A\|dist(x,T^\prime)\leq\|A\|\|x\|\delta(T,T')\\
&\leq\|A\|\|A_{T,S}^{(2)}\|\|Ax\|\delta(T,T').
\end{align*}
This means that $\delta(AT,AT^\prime)\leq \|A\|\|A_{T,S}^{(2)}\|\delta(T,T^\prime)\leq
\|A\|\|A_{T,S}^{(2)}\|\hat\delta(T,T^\prime)$.

Next we show
$$
\delta(AT^\prime,AT)\leq \frac{\|A\|\|A_{T,S}^{(2)}\|\hat\delta(T,T')}{1-(1+\|A\|\|A_{T,S}^{(2)}\|)\hat\delta(T,T')}
$$
when $\hat{\delta}(T,T^\prime)< \dfrac{1}{1+\|A\|\|A_{T,S}^{(2)}\|}.$

For $x^\prime \in T^\prime$ and $x\in T$, we have
\begin{align*}
\|Ax^\prime\|&=\|A(x^\prime-x+x)\|\geq\|Ax\|-\|A\|\|x^\prime-x\| \\
&\geq \|A_{T,S}^{(2)}\|^{-1}\|x\|-\|A\|\|x^\prime-x\| \\
&\geq \|A_{T,S}^{(2)}\|^{-1}\|x^\prime\|-\|A_{T,S}^{(2)}\|^{-1}\|x^\prime-x\|-\|A\|\|x^\prime-x\|\\
&\geq \|A_{T,S}^{(2)}\|^{-1}\|x^\prime\|-(\|A_{T,S}^{(2)}\|^{-1}+\|A\|)\|x^\prime-x\|,
\end{align*}
Thus,
$$
(\|A_{T,S}^{(2)}\|^{-1}+\|A\|)\|x^\prime-x\|\geq \|A_{T,S}^{(2)}\|^{-1}\|x^\prime\|-\|Ax^\prime\|
$$
and consequently,
$$
\|A_{T,S}^{(2)}\|^{-1}\|x^\prime\|-\|Ax^\prime\|\leq\|x^\prime\|(\|A_{T,S}^{(2)}\|^{-1}+\|A\|)\delta(T',T),
$$
that is,
\begin{equation}\label{3eqa}
\|A_{T,S}^{(2)}\|\|Ax^\prime\|\geq \big[1-(1+\|A\|\|A_{T,S}^{(2)}\|)\delta(T',T)\big]\|x'\|.
\end{equation}
Therefore,
\begin{align*}
dist(Ax^\prime, AT)&\leq \|A\|dist(x^\prime,T)\leq\|A\|\|x'\|\delta(T',T)\\
&\leq\frac{\|A\|\|Ax'\|\|A_{T,S}^{(2)}\|\hat\delta(T,T')}{1-(1+\|A\|\|A_{T,S}^{(2)}\|)\hat\delta(T,T')},
\end{align*}
i.e., $\delta(AT',AT)\leq\dfrac{\|A\|\|A_{T,S}^{(2)}\|\hat\delta(T,T')}{1-(1+\|A\|\|A_{T,S}^{(2)}\|)\hat\delta(T,T')}$
when $\hat{\delta}(T,T^\prime)<\dfrac{1}{1+\|A\|\|A_{T,S}^{(2)}\|}.$

The final assertion follows from above arguments.
\qed\end{proof}

\begin{proposition}\label{3P1}
Let $A \in B(X,Y)$ and $T\subset X$, $S\subset Y$ be closed subspaces such that $A_{T,S}^{(2)}$ exists.
Let $T'$ be a closed subspace of $X$ such that $\hat{\delta}(T,T^\prime)< \dfrac{1}{(1+\|A\|\|A_{T,S}^{(2)}\|)^2}$.
Then $A_{T^\prime,S}^{(2)}$ exists and
\begin{enumerate}
\item[$(1)$] $A_{T^\prime,S}^{(2)}=P_{T^\prime}(I_X+A_{T,S}^{(2)}P_{S^\perp}A(P_{T^\prime}-P_{T}))^{-1}A_{T,S}^{(2)}
P_{S^\perp}$.
\item[$(2)$] $\|A_{T^\prime,S}^{(2)}\|\leq \dfrac{\|A_{T,S}^{(2)}\|}{1-\|A_{T,S}^{(2)}\|\|A\|\hat{\delta}(T,T^\prime)}$.
\item[$(3)$] $\|A_{T^\prime,S}^{(2)}-A_{T,S}^{(2)}\|\leq \dfrac{1+\sqrt{5}}{2}\|A_{T^\prime,S}^{(2)}\|\|A_{T,S}^{(2)}\|
\|A\|\hat{\delta}(T,T^\prime).$
\end{enumerate}
\end{proposition}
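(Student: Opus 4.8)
The plan is to identify both $A_{T,S}^{(2)}$ and the candidate $A_{T',S}^{(2)}$ as Moore--Penrose inverses via Lemma~\ref{2L3}, and then run the stable--perturbation machinery of Lemma~\ref{2L1} on the pair $A_0:=P_{S^\perp}AP_T$, $\bar A_0:=P_{S^\perp}AP_{T'}$. Throughout write $B=A_{T,S}^{(2)}$, $a=\|A\|\,\|B\|$, $d=\hat\delta(T,T')$, so the hypothesis reads $d<(1+a)^{-2}$, which in particular forces $d<(1+a)^{-1}$ and $ad<1$. The substantive point is the existence statement: one must upgrade the mere gap estimate of Lemma~\ref{P1} to the genuine topological decomposition $Y=S\dotplus AT'$ (with $AT'$ closed), and it is precisely here that the quadratic threshold $(1+a)^{-2}$, rather than the linear $(1+a)^{-1}$ of Lemma~\ref{P1}, is forced — this is the main obstacle.

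\emph{Step 1 (existence of $A_{T',S}^{(2)}$).} I would first establish $N(A)\cap T'=\{0\}$ and closedness of $AT'$: repeating the triangle--inequality chain leading to (\ref{3eqa}) in the proof of Lemma~\ref{P1}, every $x'\in T'$ satisfies $\|B\|\,\|Ax'\|\ge\bigl(1-(1+a)\delta(T',T)\bigr)\|x'\|\ge(1-(1+a)d)\|x'\|$, and since $1-(1+a)d>0$ the restriction $A|_{T'}$ is bounded below, hence injective with closed range $AT'$. Next, Lemma~\ref{P1} gives $\hat\delta(AT,AT')\le ad/(1-(1+a)d)$, and an elementary computation shows that $ad/(1-(1+a)d)<(1+a)^{-1}$ is equivalent to $d<(1+a)^{-2}$; so $\hat\delta(AT,AT')<(1+a)^{-1}$. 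Since $AB=AA_{T,S}^{(2)}$ is idempotent with $R(AB)=AT$, $N(AB)=S$, $\|AB\|\le a$, Lemma~\ref{2L4} (with $M=AT$, $P=AB$, $M'=AT'$) then yields $Y=R(I_Y-AB)\dotplus AT'=S\dotplus AT'$. Combined with $N(A)\cap T'=\{0\}$ and the equivalence of (\ref{1eqa}) and (\ref{1eqb}), this shows $A_{T',S}^{(2)}$ exists.

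\emph{Step 2 (reduction to a stable perturbation).} By Lemma~\ref{2L3}, $A_0^+=B$, $R(A_0)=S^\perp$ is closed, $A_{T',S}^{(2)}=\bar A_0^+$, and $R(\bar A_0^+)=T'$, $N(\bar A_0^+)=S$, whence $N(\bar A_0)^\perp=T'$ and $R(\bar A_0)=S^\perp$. With $\delta A_0:=\bar A_0-A_0=P_{S^\perp}A(P_{T'}-P_T)$, Proposition~\ref{2P1}(5) gives $\|\delta A_0\|\le\|A\|\,\|P_{T'}-P_T\|=\|A\|\,d$, so $\|A_0^+\|\,\|\delta A_0\|\le ad<1$. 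Also $R(\bar A_0)\subseteq R(P_{S^\perp})=S^\perp=R(A_0)$, hence $R(\bar A_0)\cap R(A_0)^\perp\subseteq S^\perp\cap S=\{0\}$; so $\bar A_0$ is a stable perturbation of $A_0$ and Lemma~\ref{2L1} applies.

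\emph{Step 3 (formula and bounds).} By Lemma~\ref{2L1}(A), $Z:=A_0^+(I_Y+\delta A_0A_0^+)^{-1}=(I_X+A_0^+\delta A_0)^{-1}A_0^+$ is a $\{1,2\}$-inverse of $\bar A_0$. Applying Lemma~\ref{2L2} to $\bar A_0$ with this $Z$, and using $N(\bar A_0)^\perp=T'$, $R(\bar A_0)=S^\perp$, I get
$$A_{T',S}^{(2)}=\bar A_0^+=P_{T'}ZP_{S^\perp}=P_{T'}\bigl(I_X+A_{T,S}^{(2)}P_{S^\perp}A(P_{T'}-P_T)\bigr)^{-1}A_{T,S}^{(2)}P_{S^\perp},$$
which is (1). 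Finally, Lemma~\ref{2L1}(B) gives $\|A_{T',S}^{(2)}\|=\|\bar A_0^+\|\le\|A_0^+\|/(1-\|A_0^+\|\,\|\delta A_0\|)$ and $\|A_{T',S}^{(2)}-A_{T,S}^{(2)}\|=\|\bar A_0^+-A_0^+\|\le\frac{1+\sqrt5}{2}\|\bar A_0^+\|\,\|A_0^+\|\,\|\delta A_0\|$; substituting $A_0^+=A_{T,S}^{(2)}$ and $\|\delta A_0\|\le\|A\|\,d$ (the map $t\mapsto\|A_0^+\|/(1-\|A_0^+\|t)$ being increasing on $[0,\|A_0^+\|^{-1})$, and $ad<1$) yields (2) and (3).
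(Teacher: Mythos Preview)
Your proof is correct and follows essentially the same route as the paper: both arguments first use the lower bound (\ref{3eqa}) and Lemma~\ref{P1} together with Lemma~\ref{2L4} (applied to the idempotent $AA_{T,S}^{(2)}$) to obtain existence of $A_{T',S}^{(2)}$, then realise both outer inverses as Moore--Penrose inverses via Lemma~\ref{2L3}, verify that $P_{S^\perp}AP_{T'}$ is a stable perturbation of $P_{S^\perp}AP_T$, and read off the formula and the two estimates from Lemmas~\ref{2L1} and~\ref{2L2}. Your write-up is in fact slightly more careful than the paper's, since you explicitly verify that $AT'$ is closed (needed for Lemma~\ref{2L4}) and note the monotonicity that lets $\|\delta A_0\|$ be replaced by the larger $\|A\|\hat\delta(T,T')$ in the final bounds.
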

\begin{proof} By (\ref{3eqa}), $N(A)\cap T'=\{0\}$ when $\hat\delta(T,T^\prime)<\dfrac{1}{(1+\|A\|\|A_{T,S}^{(2)}\|)^2}$.

Let $P=AA_{T,S}^{(2)}$. Then $P$ is idempotent from $Y$ onto $AT$ along $S$. By Lemma \ref{P1}, we have
$$
\hat{\delta}(AT,AT^\prime)\leq \frac{\|A\|\|A_{T,S}^{(2)}\|\hat{\delta}(T,T^\prime)}{1-(1+\|A\|\|A_{T,S}^{(2)}\|)\hat{\delta}(T,T^\prime)}
<\frac{1}{1+\|A\|\|A_{T,S}^{(2)}\|}\leq \frac{1}{1+\|P\|}
$$
when $\hat{\delta}(T,T^\prime)< \dfrac{1}{(1+\|A\|\|A_{T,S}^{(2)}\|)^2}.$
So $AT^\prime$ is complemented and $AT^\prime \dotplus S=Y$ by Lemma \ref{2L4}. Therefore. $A_{T',S}^{(2)}$ exists and
$A_{T',S}^{(2)}=(P_{S^\perp}AP_{T'})^+$ by Lemma \ref{2L3}.

Set $B=P_{S^\perp}AP_{T}$, $\bar{B}=B+P_{S^\perp}A(P_{T^\prime}-P_{T})=P_{S^\perp}AP_{T'}$. Then $N(B^+)=S$
and $R(\bar B)=((N(\bar B^+))^\perp=S^\perp$. So $R(\bar{B})\cap N(B^+)=\{0\}$, that is, $\bar{B}$ is the stable perturbation of $B$.

From Proposition \ref{2P1} (5), we have
$$
\|B^+P_{S^\perp}A(P_{T^\prime}-P_{T})\| \leq \|A_{T,S}^{(2)}\|\|A\|\|P_{T^\prime}-P_{T}\|
=\|A_{T,S}^{(2)}\|\|A\|\hat{\delta}(T,T^\prime)<1.
$$
Hence, by Lemma \ref{2L1} and Lemma \ref{2L2}, we have
\begin{align*}
A_{T^\prime,S}^{(2)}=\bar{B}^+&=P_{N(\bar{B})^\perp}(I+B^+P_{S^\perp}A(P_{T^\prime}-P_{T}))^{-1}B^+ P_{R(\bar{B})}\\
&=P_{T^\prime}(I+A_{T,S}^{(2)}P_{S^\perp}A(P_{T^\prime}-P_{T}))^{-1}A_{T,S}^{(2)}P_{S^\perp},
\end{align*}
$\|A_{T^\prime,S}^{(2)}\|\leq \dfrac{\|A_{T,S}^{(2)}\|}{1-\|A_{T,S}^{(2)}\|\|A\|\hat{\delta}(T,T^\prime)}$ and
\begin{align*}
\|A_{T^\prime,S}^{(2)}-A_{T,S}^{(2)}\|&=\|\bar{B}^+ - B^+ \| \\
&\leq \frac{1+\sqrt{5}}{2}\|A_{T^\prime,S}^{(2)}\|\|A_{T,S}^{(2)}\|\|P_{S^\perp}A(P_{T^\prime}-P_{T})\|\\
&\leq \frac{1+\sqrt{5}}{2}\|A_{T^\prime,S}^{(2)}\|\|A_{T,S}^{(2)}\|\|A\|\|P_{T^\prime}-P_{T}\|\\
&=\frac{1+\sqrt{5}}{2}\|A_{T^\prime,S}^{(2)}\|\|A_{T,S}^{(2)}\|\|A\|\hat{\delta}(T,T^\prime).
\end{align*}\qed
\end{proof}

Similar to Proposition \ref{3P1}, we have
\begin{proposition}\label{3P2}
Let $A \in B(X,Y)$ and let $T\subset X$, $S\subset Y$ be closed subspaces such that $A_{T,S}^{(2)}$ exists.
Let $S'\subset Y$ be a closed subspace such that $\hat{\delta}(S,S^\prime)< \dfrac{1}{2+\|A\|\|A_{T,S}^{(2)}\|}$.
Then $A_{T,S^\prime}^{(2)}$ exists and
\begin{enumerate}
\item[$(1)$] $A_{T,S^\prime}^{(2)}=P_T(I_X+A_{T,S}^{(2)}(P_{(S^\prime) ^\perp}-P_{S^\perp})AP_{T})^{-1}A_{T,S}^{(2)}P_{(S^\prime)^\perp}$.
\item[$(2)$] $\|A_{T,S^\prime}^{(2)}\|\leq \dfrac{\|A_{T,S}^{(2)}\|}{1-\|A_{T,S}^{(2)}\|\|A\|\hat{\delta}(S,S^\prime)}$.
\item[$(3)$] $\|A_{T,S^\prime}^{(2)}-A_{T,S}^{(2)}\|\leq \dfrac{1+\sqrt{5}}{2}\|A_{T,S^\prime}^{(2)}\|\|A_{T,S}^{(2)}\|\|A\|
\hat{\delta}(S,S^\prime)$.
\end{enumerate}
\end{proposition}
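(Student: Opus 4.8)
The plan is to follow the proof of Proposition \ref{3P1} almost verbatim, with the roles of $T$ and $S$ (and of the projections $P_T$ and $P_{S^\perp}$) interchanged. The underlying fact is that perturbing $S$ to $S'$ converts $B:=P_{S^\perp}AP_T$, whose Moore--Penrose inverse is $A_{T,S}^{(2)}$ by Lemma \ref{2L3}, into $\bar B:=P_{(S')^\perp}AP_T$, and the whole argument rests on $\bar B$ being a stable perturbation of $B$.

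First I would check that $A_{T,S'}^{(2)}$ exists. Since $A_{T,S}^{(2)}$ exists we already have $N(A)\cap T=\{0\}$, $AT$ closed and $Y=AT\dotplus S$, so by (\ref{1eqb}) it suffices to produce the splitting $Y=AT\dotplus S'$. As noted in the proof of Proposition \ref{3P1}, $AA_{T,S}^{(2)}$ is an idempotent with range $AT$; hence $Q:=I_Y-AA_{T,S}^{(2)}$ is an idempotent with $R(Q)=S$, $R(I_Y-Q)=AT$ and $\|Q\|\le 1+\|A\|\|A_{T,S}^{(2)}\|$. The hypothesis gives
$$
\hat\delta(S,S')<\frac{1}{2+\|A\|\|A_{T,S}^{(2)}\|}\le\frac{1}{1+\|Q\|},
$$
so Lemma \ref{2L4} yields $Y=R(I_Y-Q)\dotplus S'=AT\dotplus S'$; by (\ref{1eqb}) and Lemma \ref{2L3}, $A_{T,S'}^{(2)}$ exists and $A_{T,S'}^{(2)}=(P_{(S')^\perp}AP_T)^+$.

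Next I would set $B=P_{S^\perp}AP_T$ and $\bar B=P_{(S')^\perp}AP_T=B+\delta B$, where $\delta B=(P_{(S')^\perp}-P_{S^\perp})AP_T$. By Lemma \ref{2L3}, $B^+=A_{T,S}^{(2)}$ and $\bar B^+=A_{T,S'}^{(2)}$, so $N(B^+)=S$, $N(B)=N(\bar B)=T^\perp$, $R(B)=S^\perp$ and $R(\bar B)=(S')^\perp$. Since $P_{(S')^\perp}-P_{S^\perp}=P_S-P_{S'}$, Proposition \ref{2P1}(5) gives $\|\delta B\|\le\|A\|\,\hat\delta(S,S')$, whence $\|B^+\|\|\delta B\|\le\|A_{T,S}^{(2)}\|\|A\|\,\hat\delta(S,S')<1$. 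Moreover, because $\hat\delta(S,S')<1$, any $z\in(S')^\perp\cap S$ satisfies $\|z\|=dist(z,S')\le\|z\|\,\hat\delta(S,S')$ and so $z=0$; thus $R(\bar B)\cap N(B^+)=(S')^\perp\cap S=\{0\}$, i.e. $\bar B$ is the stable perturbation of $B$. Now Lemma \ref{2L1}(A)(3) together with Lemma \ref{2L2} gives
\begin{align*}
A_{T,S'}^{(2)}=\bar B^+&=P_{N(\bar B)^\perp}(I_X+B^+\delta B)^{-1}B^+P_{R(\bar B)}\\
&=P_T\big(I_X+A_{T,S}^{(2)}(P_{(S')^\perp}-P_{S^\perp})AP_T\big)^{-1}A_{T,S}^{(2)}P_{(S')^\perp},
\end{align*}
which is (1); and Lemma \ref{2L1}(B), together with $\|B^+\|=\|A_{T,S}^{(2)}\|$, $\|\bar B^+\|=\|A_{T,S'}^{(2)}\|$ and $\|\delta B\|\le\|A\|\hat\delta(S,S')$, gives (2) and (3).

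The one step that is not a purely mechanical copy of Proposition \ref{3P1} is the verification that $\bar B$ is a stable perturbation of $B$: there $S$ was fixed, so $R(\bar B)=S^\perp$ and $R(\bar B)\cap N(B^+)=S^\perp\cap S=\{0\}$ came for free, whereas here $R(\bar B)=(S')^\perp$ and one has to use $\hat\delta(S,S')<1$ to see that $(S')^\perp\cap S=\{0\}$. I expect this to be the only (minor) obstacle; the rest transcribes directly.
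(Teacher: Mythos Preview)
Your proposal is correct and follows exactly the approach the paper intends (the paper itself only writes ``using similar methods in the proof of Proposition~\ref{3P1}'' after establishing $Y=AT\dotplus S'$ via Lemma~\ref{2L4}). One small simplification: the stable--perturbation check you flag as the only non-mechanical step is in fact free here, since $T$ is held fixed and hence $N(\bar B)=N(B)=T^\perp$, so condition~(2) of Lemma~\ref{2L1}(A) gives $N(\bar B)^\perp\cap N(B)=T\cap T^\perp=\{0\}$ immediately, without needing the $(S')^\perp\cap S=\{0\}$ argument.
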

\begin{proof}
Note that $Q=I_Y-AA_{T,S}^{(2)}$ is an idempotent operator from $Y$ onto $S$ along $AT$ and
$$
\hat{\delta}(S,S^\prime)< \frac{1}{2+\|A\|\|A_{T,S}^{(2)}\|}\leq \frac{1}{1+\|I_Y-Q\|}.
$$
So $Y=AT\dotplus S'$ by Lemma \ref{2L4} and hence $A_{T,S'}^{(2)}$ exists with $A_{T,S'}^{(2)}=(P_{{S'}^\perp}AP_T)^+$.
Using similar methods in the proof of Proposition \ref{3P1}, we can get the results.
\qed
\end{proof}

Now we present the main result of the paper as follows.

\begin{theorem}\label{3T1}
Let $A \in B(X,Y)$ and let $T,\,T^\prime\subset X$, $S,\,S^\prime\subset Y$ be closed subspaces such that
$A_{T,S}^{(2)}$ exists and
$\max\{\hat{\delta}(T,T^\prime),\hat{\delta}(S,S^\prime)\}< \dfrac{1}{(1+\|A\|\|A_{T,S}^{(2)}\|)^2}$.
Then $A_{T^\prime,S^\prime}^{(2)}$ exists and
\begin{enumerate}
 \item[$(1)$] $A_{T',S'}^{(2)}=P_{T'}\big[I_X+P_{T'}(I+A_{T,S}^{(2)}P_{S^\perp}A(P_{T'}
 -P_{T}))^{-1}A_{T,S}^{(2)}(P_{S^\perp}P_{{S'}^\perp}-P_{S^\perp})AP_{T'}\big]^{-1}\\
\hspace*{1.3cm}\times P_{T'}(I_X+A_{T,S}^{(2)}P_{S^\perp}A(P_{T'}-P_{T}))^{-1}A_{T,S}^{(2)}
P_{S^\perp}P_{{S'}^\perp}.$
 \item[$(2)$] $\|A_{T^\prime,S^\prime}^{(2)}\|\leq \dfrac{\|A_{T,S}^{(2)}\|}
 {1-\|A_{T,S}^{(2)}\|\|A\|(\hat\delta(T,T^\prime)+\hat\delta(S,S'))}.$
\item[$(3)$] $\|A_{T^\prime,S^\prime}^{(2)}-A_{T,S}^{(2)}\|\leq \dfrac{1+\sqrt{5}}{2}
\dfrac{\|A_{T,S}^{(2)}\|^2\|A\|(\hat{\delta}(T,T^\prime)+\hat{\delta}(S,S^\prime))}
      {1-\|A_{T,S}^{(2)}\|\|A\|(\hat{\delta}(T,T^\prime)+\hat{\delta}(S,S^\prime))}.$
\end{enumerate}
\end{theorem}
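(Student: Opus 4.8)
The natural strategy is to combine Propositions \ref{3P1} and \ref{3P2} by changing the range subspace and the kernel subspace one at a time. First I would pass from $A_{T,S}^{(2)}$ to $A_{T',S}^{(2)}$ using Proposition \ref{3P1}; since $\hat\delta(T,T')<\tfrac1{(1+\|A\|\|A_{T,S}^{(2)}\|)^2}$, that proposition applies and gives both the explicit formula for $A_{T',S}^{(2)}$ and the bound $\|A_{T',S}^{(2)}\|\le \|A_{T,S}^{(2)}\|/(1-\|A_{T,S}^{(2)}\|\|A\|\hat\delta(T,T'))$. Then I would apply Proposition \ref{3P2} with $A_{T',S}^{(2)}$ playing the role of $A_{T,S}^{(2)}$, moving the kernel from $S$ to $S'$. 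To invoke it I must verify the smallness hypothesis $\hat\delta(S,S')<\tfrac1{2+\|A\|\|A_{T',S}^{(2)}\|}$; this is the first point requiring care, since $\|A_{T',S}^{(2)}\|$ is only controlled by the bound just obtained, and one must check that the assumed inequality $\hat\delta(S,S')<\tfrac1{(1+\|A\|\|A_{T,S}^{(2)}\|)^2}$ together with the bound on $\|A_{T',S}^{(2)}\|$ indeed forces $2+\|A\|\|A_{T',S}^{(2)}\|<\tfrac1{\hat\delta(S,S')}$. A short monotonicity computation, using that $\|A_{T,S}^{(2)}\|\|A\|\hat\delta(T,T')<\tfrac1{1+\|A\|\|A_{T,S}^{(2)}\|}$, should close this gap.

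Once both propositions have been chained, assertion (1) is obtained by literal substitution: take the formula of Proposition \ref{3P2}(1) with $S$ replaced by $S'$ and $A_{T,S}^{(2)}$ replaced by the expression for $A_{T',S}^{(2)}$ from Proposition \ref{3P1}(1), then simplify using $P_{T'}(I+A_{T,S}^{(2)}P_{S^\perp}A(P_{T'}-P_T))^{-1}A_{T,S}^{(2)}P_{S^\perp}P_{T'}$-type idempotency relations (note $A_{T',S}^{(2)}$ has range $T'$, so the outer $P_{T'}$'s compose correctly). For (2), I would feed the bound of Proposition \ref{3P1}(2) into that of Proposition \ref{3P2}(2):
\begin{align*}
\|A_{T',S'}^{(2)}\|&\le\frac{\|A_{T',S}^{(2)}\|}{1-\|A_{T',S}^{(2)}\|\|A\|\hat\delta(S,S')}
\le\frac{\|A_{T,S}^{(2)}\|}{1-\|A_{T,S}^{(2)}\|\|A\|\hat\delta(T,T')-\|A_{T,S}^{(2)}\|\|A\|\hat\delta(S,S')},
\end{align*}
where the second inequality follows by substituting the Proposition \ref{3P1}(2) estimate and clearing denominators; this is exactly the claimed form with $\hat\delta(T,T')+\hat\delta(S,S')$ in the denominator.

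For (3) I would use the triangle inequality $\|A_{T',S'}^{(2)}-A_{T,S}^{(2)}\|\le\|A_{T',S'}^{(2)}-A_{T',S}^{(2)}\|+\|A_{T',S}^{(2)}-A_{T,S}^{(2)}\|$ and estimate the two terms by Proposition \ref{3P2}(3) and Proposition \ref{3P1}(3) respectively, in each case replacing the factor $\|A_{T',S}^{(2)}\|$ or $\|A_{T',S'}^{(2)}\|$ by the uniform bound from part (2), and replacing $\|A_{T,S}^{(2)}\|$-type factors so that both terms acquire the common denominator $1-\|A_{T,S}^{(2)}\|\|A\|(\hat\delta(T,T')+\hat\delta(S,S'))$; adding $\hat\delta(S,S')+\hat\delta(T,T')$ in the numerator then yields the stated bound. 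The main obstacle I anticipate is purely bookkeeping: making the constants line up so that one genuinely gets $\hat\delta(T,T')+\hat\delta(S,S')$ rather than a messier combination, which requires carefully tracking how the bound $\|A_{T',S}^{(2)}\|\le\|A_{T,S}^{(2)}\|/(1-\|A_{T,S}^{(2)}\|\|A\|\hat\delta(T,T'))$ interacts with each denominator; no new idea beyond Propositions \ref{3P1} and \ref{3P2} should be needed.
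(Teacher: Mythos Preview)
Your proposal is correct and follows essentially the same route as the paper: apply Proposition~\ref{3P1} to pass from $A_{T,S}^{(2)}$ to $A_{T',S}^{(2)}$, verify via the resulting bound $\|A_{T',S}^{(2)}\|<\|A_{T,S}^{(2)}\|(1+\|A\|\|A_{T,S}^{(2)}\|)$ that the hypothesis of Proposition~\ref{3P2} is met, then chain formulas and bounds. For part~(3) the bookkeeping you anticipate does work out to an exact identity once you bound $\|A_{T',S}^{(2)}\|$ by the sharper Proposition~\ref{3P1}(2) estimate (not the looser part~(2) bound) and $\|A_{T',S'}^{(2)}\|$ by part~(2): with $a=\|A\|\|A_{T,S}^{(2)}\|$, $t=\hat\delta(T,T')$, $s=\hat\delta(S,S')$ one has $\frac{1}{1-at}\big(t+\frac{s}{1-a(t+s)}\big)=\frac{t+s}{1-a(t+s)}$.
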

\begin{proof} If $\hat{\delta}(T,T^\prime)< \dfrac{1}{(1+\|A\|\|A_{T,S}^{(2)}\|)^2},$ then by Proposition \ref{3P1},
$A_{T',S}^{(2)}$ exists and
\begin{align}
\label{3eqb}A_{T^\prime,S}^{(2)}&=P_{T^\prime}(I+A_{T,S}^{(2)}P_{S^\perp}A(P_{T^\prime}-P_{T}))^{-1}A_{T,S}^{(2)}P_{S^\perp}\\
\label{3eqc}\|A_{T^\prime,S}^{(2)}\|&\leq \frac{\|A_{T,S}^{(2)}\|}{1-\|A_{T,S}^{(2)}\|\|A\|\hat{\delta}(T,T^\prime)}
<\|A_{T,S}^{(2)}\|(1+\|A\|\|A_{T,S}^{(2)}\|)
\end{align}
for $\hat{\delta}(T,T^\prime)<\dfrac{1}{(1+\|A\|\|A_{T,S}^{(2)}\|)^2}\le\dfrac{1}{1+\|A\|\|A_{T,S}^{(2)}\|}$.

Noting that $\|A\|\|A_{T,S}^{(2)}\|\geq\|AA_{T,S}^{(2)}\|\geq 1$ and
$$
(1+\|A\|\|A_{T,S}^{(2)}\|)^2 \geq 2+\|A\|\|A_{T,S}^{(2)}\|(1+\|A\|\|A_{T,S}^{(2)}\|)>2+\|A\|\|A_{T',S}^{(2)}\|
$$
by (\ref{3eqc}), we have
$$
\hat{\delta}(S,S^\prime)<\dfrac{1}{(1+\|A\|\|A_{T,S}^{(2)}\|)^2}< \frac{1}{2+\|A\|\|A_{T^\prime,S}^{(2)}\|}.
$$
Hence $A_{T^\prime,S^\prime}^{(2)}$ exists with $\|A_{T^\prime,S^\prime}^{(2)}\|\leq
\dfrac{\|A_{T^\prime,S}^{(2)}\|}{1-\|A_{T^\prime,S}^{(2)}\|\|A\|\hat{\delta}(S,S^\prime)}$ and
$$
A_{T^\prime,S^\prime}^{(2)}=P_{T^\prime}(I_X+A_{T^\prime,S}^{(2)}(P_{(S^\prime)^\perp}-P_{S^\perp})
AP_{T^\prime})^{-1}A_{T^\prime,S}^{(2)}P_{(S^\prime)^\perp}
$$
by Proposition \ref{3P2}. Thus we have
\begin{align*}
A_{T^\prime,S^\prime}^{(2)}&=P_{T'}
\big[I_X+P_{T'}(I+A_{T,S}^{(2)}P_{S^\perp}A(P_{T'}-P_{T}))^{-1}A_{T,S}^{(2)}(P_{S^\perp}P_{{S'}^\perp}-P_{S^\perp})
AP_{T^\prime}\big]^{-1}\\
&\ \,\times P_{T'}(I+A_{T,S}^{(2)}P_{S^\perp}A(P_{T'}-P_{T}))^{-1}A_{T,S}^{(2)}P_{S^\perp}P_{{S'}^\perp}
\end{align*}
by (\ref{3eqb}) and
\begin{align*}
\|A_{T^\prime,S^\prime}^{(2)}\|&\leq \frac{1}{1-\frac{\|A_{T,S}^{(2)}\|}{1-\|A_{T,S}^{(2)}\|\|A\|\hat{\delta}(T,T^\prime)}\|A\|\hat{\delta}(S,S^\prime)}
\times \frac{\|A_{T,S}^{(2)}\|}{1-\|A_{T,S}^{(2)}\|\|A\|\hat{\delta}(T,T^\prime)}\\
&=\frac{\|A_{T,S}^{(2)}\|}{1-\|A_{T,S}^{(2)}\|\|A\|(\hat{\delta}(T,T^\prime)+\hat{\delta}(S,S^\prime))}.
\end{align*}
Moreover,
\begin{align*}
\|A_{T^\prime,S^\prime}^{(2)}-A_{T,S}^{(2)}\|&=\|A_{T^\prime,S^\prime}^{(2)}- A_{T^\prime,S}^{(2)}+ A_{T^\prime,S}^{(2)}-A_{T,S}^{(2)}\| \\
&\leq \|A_{T^\prime,S^\prime}^{(2)}- A_{T^\prime,S}^{(2)}\|+\| A_{T^\prime,S}^{(2)}-A_{T,S}^{(2)}\| \\
&\leq \frac{1+\sqrt{5}}{2}\|A_{T^\prime,S}^{(2)}\|\|A\|(\|A_{T^\prime,S^\prime}^{(2)}\|\hat{\delta}(S,S^\prime)+
\|A_{T,S}^{(2)}\|\hat{\delta}(T,T^\prime))\\
&\leq \frac{1+\sqrt{5}}{2}\frac{\|A_{T,S}^{(2)}\|\|A\|}{1-\|A_{T,S}^{(2)}\|\|A\|\hat{\delta}(T,T^\prime)}
(\|A_{T^\prime,S^\prime}^{(2)}\|\hat{\delta}(S,S^\prime)+\|A_{T,S}^{(2)}\|\hat{\delta}(T,T^\prime))\\
&\leq \frac{1+\sqrt{5}}{2}\frac{\|A_{T,S}^{(2)}\|\|A\|}{1-\|A_{T,S}^{(2)}\|\|A\|\hat{\delta}(T,T^\prime)}\\
&\ \,\times\bigg(\|A_{T,S}^{(2)}\|\hat{\delta}(T,T^\prime)+\frac{\|A_{T,S}^{(2)}\|}{1-\|A_{T,S}^{(2)}\|\|A\|(\hat{\delta}(T,T^\prime)+\hat{\delta}(S,S^\prime))}
\hat{\delta}(S,S^\prime)\bigg)\\
&=\frac{1+\sqrt{5}}{2}\frac{\|A_{T,S}^{(2)}\|^2\|A\|(\hat{\delta}(T,T^\prime)+\hat{\delta}(S,S^\prime))}
{1-\|A_{T,S}^{(2)}\|\|A\|(\hat{\delta}(T,T^\prime)+\hat{\delta}(S,S^\prime))}.
\end{align*}
\qed
\end{proof}

\begin{lemma}\label{3L2}
Let $A,\,\bar{A}=A+E\in B(X,Y)$ and let $T\subset X$, $S\subset Y$ be closed subspaces such that $A_{T,S}^{(2)}$ exists.
Suppose that $\|A_{T,S}^{(2)}\|\|E\|<1$. Then
$$
\bar{A}_{T,S}^{(2)}=(I_X+A_{T,S}^{(2)}E)^{-1}A_{T,S}^{(2)}=A_{T,S}^{(2)}(I_Y+EA_{T,S}^{(2)})^{-1}.$$
and
$$
\|\bar{A}_{T,S}^{(2)}\|\leq\frac{\|A_{T,S}^{(2)}\|}{1-\|A_{T,S}^{(2)}\|\|E\|},\quad
\|\bar{A}_{T,S}^{(2)}-A_{T,S}^{(2)}\|\leq \frac{\|A_{T,S}^{(2)}\|^2\|E\|}{1-\|A_{T,S}^{(2)}\|\|E\|}.
$$
\end{lemma}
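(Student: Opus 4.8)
The plan is to realize $\bar A_{T,S}^{(2)}$ as a perturbed $A_{T,S}^{(2)}$ in exactly the way Lemma~\ref{2L3} sets things up, and then invoke the stable perturbation machinery of Lemma~\ref{2L1}. Since $A_{T,S}^{(2)}$ exists, $N(A)\cap T=\{0\}$, $AT$ is closed and $Y=AT\dotplus S$, so by Lemma~\ref{2L3} we have $A_{T,S}^{(2)}=(P_{S^\perp}AP_T)^+$ with $R(A_{T,S}^{(2)})=T$, $N(A_{T,S}^{(2)})=S$. Writing $B=P_{S^\perp}AP_T$ and $\bar B=P_{S^\perp}\bar A P_T=B+P_{S^\perp}EP_T$, the natural guess is that $\bar A_{T,S}^{(2)}=\bar B^+$ once we know $\bar A_{T,S}^{(2)}$ exists, i.e.\ once we know $N(\bar A)\cap T=\{0\}$ and $\bar AT\dotplus S=Y$.

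First I would check that $\bar B$ is a stable perturbation of $B$. We have $R(B)=S^\perp$ (from the proof of Lemma~\ref{2L3}) and $\|B^+\|\|P_{S^\perp}EP_T\|\le\|A_{T,S}^{(2)}\|\|E\|<1$, so Lemma~\ref{2L1} applies provided $R(\bar B)\cap R(B)^\perp=\{0\}$. The cleanest route is to verify directly that $R(\bar B)=S^\perp$: the hypothesis $\|A_{T,S}^{(2)}\|\|E\|<1$ forces, via the identity $P_{S^\perp}\bar A x = P_{S^\perp}AP_T x(I+\cdots)$-type estimate (exactly as in the derivation of (\ref{3eqa})), that $N(\bar B)=N(A_{T,S}^{(2)})^{\perp}=T^\perp$ still holds and $R(\bar B)$ is closed; since $R(\bar B)\subseteq S^\perp$ and $\bar B$ has the same rank-complement structure, $R(\bar B)=S^\perp$, whence $R(\bar B)\cap R(B)^\perp=S^\perp\cap S=\{0\}$. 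Then Lemma~\ref{2L1}(A) gives that $\bar B^+_{GI}$ exists and $\bar B^+_{GI}=(I_X+B^+P_{S^\perp}EP_T)^{-1}B^+$, while Lemma~\ref{2L1}(B) supplies the two norm bounds in the form $\|\bar B^+\|\le\|B^+\|/(1-\|B^+\|\|P_{S^\perp}EP_T\|)$ and $\|\bar B^+-B^+\|\le\frac{1+\sqrt5}{2}\|\bar B^+\|\|B^+\|\|P_{S^\perp}EP_T\|$.

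Next I would identify $\bar B^+$ with $\bar A_{T,S}^{(2)}$. Since $N(\bar B)=T^\perp$ and $R(\bar B)=S^\perp$, Lemma~\ref{2L3} read backwards (or the argument in its proof showing $(P_{S^\perp}\bar A P_T)^+ = (P_{S^\perp}\bar A P_T)^+ \bar A (P_{S^\perp}\bar A P_T)^+$) gives that $\bar A_{T,S}^{(2)}$ exists and equals $\bar B^+$. Then I would simplify: using $B^+=A_{T,S}^{(2)}=P_T A_{T,S}^{(2)} = A_{T,S}^{(2)} P_{S^\perp}$ (because $R(A_{T,S}^{(2)})=T$ and $N(A_{T,S}^{(2)})=S$), the factor $B^+P_{S^\perp}EP_T$ collapses to $A_{T,S}^{(2)}EP_T$, and one can further absorb $P_T$ and $P_{S^\perp}$ using $A_{T,S}^{(2)}AA_{T,S}^{(2)}=A_{T,S}^{(2)}$-type identities to reach $(I_X+A_{T,S}^{(2)}E)^{-1}A_{T,S}^{(2)}$; the second expression $A_{T,S}^{(2)}(I_Y+EA_{T,S}^{(2)})^{-1}$ follows from the elementary resolvent identity $(I+CD)^{-1}C=C(I+DC)^{-1}$. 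The stated norm bounds then drop out of the Lemma~\ref{2L1}(B) estimates by the substitution $\|\bar B^+\|\le\|A_{T,S}^{(2)}\|/(1-\|A_{T,S}^{(2)}\|\|E\|)$ together with $\|P_{S^\perp}EP_T\|\le\|E\|$ — note the first-kind form $(I+A_{T,S}^{(2)}E)^{-1}$ also yields $\|\bar A_{T,S}^{(2)}\|\le\|A_{T,S}^{(2)}\|/(1-\|A_{T,S}^{(2)}\|\|E\|)$ directly by a Neumann series, and for part two one writes $\bar A_{T,S}^{(2)}-A_{T,S}^{(2)}=-(I+A_{T,S}^{(2)}E)^{-1}A_{T,S}^{(2)}EA_{T,S}^{(2)} = -\bar A_{T,S}^{(2)}EA_{T,S}^{(2)}$ and takes norms, which actually gives the cleaner constant $1$ rather than $\frac{1+\sqrt5}{2}$ and matches the claimed bound $\|A_{T,S}^{(2)}\|^2\|E\|/(1-\|A_{T,S}^{(2)}\|\|E\|)$.

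The main obstacle I anticipate is the bookkeeping in step two: one must be careful that $N(\bar B)$ really stays equal to $T^\perp$ and $R(\bar B)$ really stays equal to $S^\perp$ rather than merely being close, since otherwise $\bar B^+$ would not be the $A_{T,S}^{(2)}$-inverse with the \emph{same} prescribed range $T$ and kernel $S$. This is where the quantitative hypothesis $\|A_{T,S}^{(2)}\|\|E\|<1$ is essential and must be used in the same style as the chain of inequalities leading to (\ref{3eqa}); once that is in hand, everything else is either a citation of Lemma~\ref{2L1}/Lemma~\ref{2L2}/Lemma~\ref{2L3} or routine algebraic simplification of the resolvent-type expressions.
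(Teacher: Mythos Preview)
Your approach is correct but considerably more elaborate than the paper's. The paper does not use Lemma~\ref{2L1}, Lemma~\ref{2L2}, or Lemma~\ref{2L3} here at all: it simply sets $B=(I_X+A_{T,S}^{(2)}E)^{-1}A_{T,S}^{(2)}$ and verifies the three defining conditions $R(B)=T$, $N(B)=S$, $B\bar A B=B$ directly from the intertwining identity $(I_X+A_{T,S}^{(2)}E)A_{T,S}^{(2)}=A_{T,S}^{(2)}(I_Y+EA_{T,S}^{(2)})$, which makes the two resolvent forms equal and immediately shows the range and kernel are unchanged. The norm bounds then follow exactly from the Neumann-series and $\bar A_{T,S}^{(2)}-A_{T,S}^{(2)}=-(I_X+A_{T,S}^{(2)}E)^{-1}A_{T,S}^{(2)}EA_{T,S}^{(2)}$ computations you already sketched at the end of your proposal.

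Your route---recasting everything as a stable perturbation of $P_{S^\perp}AP_T$ and invoking Lemma~\ref{2L1}---does work, and it is in fact the template used for Propositions~\ref{3P1} and \ref{3P2}, where $T$ or $S$ actually moves. But here $T$ and $S$ are fixed, so the stable-perturbation scaffolding is unnecessary: the bookkeeping you flag as the ``main obstacle'' (showing $N(\bar B)=T^\perp$ and $R(\bar B)=S^\perp$ exactly) is precisely what the paper sidesteps by checking the $\{2\}$-inverse definition directly. One minor cost of your detour is that Lemma~\ref{2L1}(B) first produces the constant $\tfrac{1+\sqrt5}{2}$, which you then have to improve to $1$ by the direct identity---whereas the paper never sees that constant at all.
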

\begin{proof} If $\|A_{T,S}^{(2)}\|\|E\|<1$, then $I_X+A_{T,S}^{(2)}E$ and $I_Y+EA_{T,S}^{(2)}$ are invertible.

Since $(I_X+A_{T,S}^{(2)}E)A_{T,S}^{(2)}=A_{T,S}^{(2)}(I_Y+EA_{T,S}^{(2)})$, it follows that
\begin{equation}\label{3eqd}
(I_X+A_{T,S}^{(2)}E)^{-1}A_{T,S}^{(2)}=A_{T,S}^{(2)}(I_Y+EA_{T,S}^{(2)})^{-1}.
\end{equation}
Put $B=(I_X+A_{T,S}^{(2)}E)^{-1}A_{T,S}^{(2)}$. From (\ref{3eqd}), we get that
$$
R(B)=R(A_{T,S}^{(2)})=T,\quad N(B)=N(A_{T,S}^{(2)})=S,\quad B(A+E)B=B.
$$
Therefore, $\bar{A}_{T,S}^{(2)}=(I_X+A_{T,S}^{(2)}E)^{-1}A_{T,S}^{(2)}$ and
$\|\bar{A}_{T,S}^{(2)}\|\leq\dfrac{\|A_{T,S}^{(2)}\|}{1-\|A_{T,S}^{(2)}\|\|E\|}.$

Since
$$
\bar{A}_{T,S}^{(2)}-A_{T,S}^{(2)}=(I_X+A_{T,S}^{(2)}E)^{-1}A_{T,S}^{(2)}-A_{T,S}^{(2)}
=-(I_X+A_{T,S}^{(2)}E)^{-1}A_{T,S}^{(2)}EA_{T,S}^{(2)},
$$
we have
$$\|\bar{A}_{T,S}^{(2)}-A_{T,S}^{(2)}\|\leq \frac{\|A_{T,S}^{(2)}\|^2\|E\|}{1-\|A_{T,S}^{(2)}\|\|E\|}.$$
\qed
\end{proof}

As an end of this section, we give the perturbation analysis for $A_{T,S}^{(2)}$ when $T$, $S$ and $A$ all have small
perturbation.
\begin{theorem}\label{3T2}
Let $A,\,\bar{A}=A+E\in B(X,Y)$ and let $T,\,T'\subset X$, $S,\,S'\subset Y$ be closed subspaces such that $A_{T,S}^{(2)}$ exists and
$$
\max\{\hat{\delta}(T,T^\prime),\hat{\delta}(S,S^\prime)\}< \frac{1}{(1+\|A\|\|A_{T,S}^{(2)}\|)^2}.
$$
If $\|A_{T,S}^{(2)}\|\|E\|<\dfrac{1}{1+\|A_{T,S}^{(2)}\|\|A\|}$, then
\begin{align*}
&\begin{aligned}
          (1)\ \bar{A}_{T^\prime,S^\prime}^{(2)}&
          =\{I_X+P_{T^\prime}[I_X+P_{T^\prime}(I+A_{T,S}^{(2)}P_{S^\perp}A(P_{T^\prime}-P_{T}))^{-1}A_{T,S}^{(2)}\\
          &\ \,\times(P_{S^\perp}P_{{S^\prime}^\perp}-P_{S^\perp})AP_{T^\prime}]^{-1}
         P_{T^\prime}(I_X+A_{T,S}^{(2)}P_{S^\perp}A(P_{T^\prime}-P_{T}))^{-1}A_{T,S}^{(2)}\\
         &\ \,\times P_{S^\perp}P_{{S^\prime}^\perp}E\}^{-1}P_{T^\prime}\{I_X+P_{T^\prime}(I+A_{T,S}^{(2)}P_{S^\perp}A
         (P_{T^\prime}-P_{T}))^{-1}\\
         &\ \,\times A_{T,S}^{(2)}(P_{S^\perp}P_{{S^\prime}^\perp}-P_{S^\perp})AP_{T^\prime}\}^{-1}\\
       &\ \,\times P_{T^\prime}(I_X+A_{T,S}^{(2)}P_{S^\perp}A(P_{T^\prime}-P_{T}))^{-1}A_{T,S}^{(2)}P_{S^\perp}P_{{S^\prime}^\perp},
\end{aligned} \\
&(2)\ \|\bar{A}_{T^\prime,S^\prime}^{(2)}\|\leq \frac{\|A_{T,S}^{(2)}\|}
     {1-\|A_{T,S}^{(2)}\|\big[\|E\|+\|A\|(\hat{\delta}(T,T^\prime)+\hat{\delta}(S,S^\prime))\big]}, \\
&(3)\ \|\bar{A}_{T^\prime,S^\prime}^{(2)}-A_{T,S}^{(2)}\|\leq \frac{\|A_{T,S}^{(2)}\|^2\big[\|E\|+\frac{1+\sqrt{5}}{2}\|A\|
(\hat{\delta}(T,T^\prime)+\hat{\delta}(S,S^\prime))\big]}
{1-\|A_{T,S}^{(2)}\|\big[\|E\|+\|A\|(\hat\delta(T,T^\prime)+\hat{\delta}(S,S^\prime))\big]}.
\end{align*}
\end{theorem}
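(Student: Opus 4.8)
The plan is to combine the two perturbation results already in hand: Theorem \ref{3T1}, which handles the perturbation of the subspaces $T$ and $S$, and Lemma \ref{3L2}, which handles the perturbation of the operator $A$ while keeping the subspaces fixed. The idea is to factor the total perturbation as a composition of these two effects. First I would pass from $A_{T,S}^{(2)}$ to $A_{T',S'}^{(2)}$ using the hypothesis $\max\{\hat\delta(T,T'),\hat\delta(S,S')\}<(1+\|A\|\|A_{T,S}^{(2)}\|)^{-2}$; Theorem \ref{3T1} then tells us $A_{T',S'}^{(2)}$ exists, gives its explicit formula, and gives the bound
$$
\|A_{T',S'}^{(2)}\|\le\frac{\|A_{T,S}^{(2)}\|}{1-\|A_{T,S}^{(2)}\|\|A\|(\hat\delta(T,T')+\hat\delta(S,S'))}.
$$

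Next I would apply Lemma \ref{3L2} with the operator $A$ there replaced by $A$, the subspaces replaced by $T'$ and $S'$, and the perturbation $E$ kept as is; the role of $A_{T,S}^{(2)}$ in that lemma is played by $A_{T',S'}^{(2)}$. For this to be legitimate I must check the smallness hypothesis of Lemma \ref{3L2}, namely $\|A_{T',S'}^{(2)}\|\|E\|<1$. This is where the hypothesis $\|A_{T,S}^{(2)}\|\|E\|<(1+\|A_{T,S}^{(2)}\|\|A\|)^{-1}$ enters: using the displayed bound on $\|A_{T',S'}^{(2)}\|$ together with the fact (noted in the proof of Theorem \ref{3T1}) that $\|A\|\|A_{T,S}^{(2)}\|\ge 1$ and that $\hat\delta(T,T')+\hat\delta(S,S')<2(1+\|A\|\|A_{T,S}^{(2)}\|)^{-2}$, one gets $\|A_{T',S'}^{(2)}\|\le\|A_{T,S}^{(2)}\|(1+\|A\|\|A_{T,S}^{(2)}\|)$-type control, whence $\|A_{T',S'}^{(2)}\|\|E\|<1$. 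Then Lemma \ref{3L2} yields $\bar A_{T',S'}^{(2)}=(I_X+A_{T',S'}^{(2)}E)^{-1}A_{T',S'}^{(2)}$, and substituting the explicit formula for $A_{T',S'}^{(2)}$ from Theorem \ref{3T1}(1) produces the long expression in part (1).

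For part (2), I chain the two norm bounds: Lemma \ref{3L2} gives $\|\bar A_{T',S'}^{(2)}\|\le\|A_{T',S'}^{(2)}\|/(1-\|A_{T',S'}^{(2)}\|\|E\|)$, and then I substitute the Theorem \ref{3T1}(2) bound for $\|A_{T',S'}^{(2)}\|$. The algebra is exactly the same telescoping trick used at the end of the proof of Theorem \ref{3T1}: writing $\alpha=\|A_{T,S}^{(2)}\|\|A\|(\hat\delta(T,T')+\hat\delta(S,S'))$ and $\beta=\|A_{T,S}^{(2)}\|\|E\|$, the bound $\frac{\|A_{T,S}^{(2)}\|}{1-\alpha}\big/\big(1-\frac{\|A_{T,S}^{(2)}\|}{1-\alpha}\|E\|\big)$ collapses to $\frac{\|A_{T,S}^{(2)}\|}{1-\alpha-\beta}$, which is the claimed bound. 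For part (3) I split $\bar A_{T',S'}^{(2)}-A_{T,S}^{(2)}=(\bar A_{T',S'}^{(2)}-A_{T',S'}^{(2)})+(A_{T',S'}^{(2)}-A_{T,S}^{(2)})$, bound the first summand by Lemma \ref{3L2} as $\frac{\|A_{T',S'}^{(2)}\|^2\|E\|}{1-\|A_{T',S'}^{(2)}\|\|E\|}\le\|\bar A_{T',S'}^{(2)}\|\|A_{T',S'}^{(2)}\|\|E\|$, the second by Theorem \ref{3T1}(3), and then feed in the bounds for $\|A_{T',S'}^{(2)}\|$ and $\|\bar A_{T',S'}^{(2)}\|$ from the previous parts and simplify.

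The main obstacle is bookkeeping rather than conceptual: one must verify carefully that the single hypothesis $\|A_{T,S}^{(2)}\|\|E\|<(1+\|A_{T,S}^{(2)}\|\|A\|)^{-1}$, combined with the subspace-gap hypothesis, really does guarantee $\|A_{T',S'}^{(2)}\|\|E\|<1$ so that Lemma \ref{3L2} applies to $\bar A$ with subspaces $T',S'$; and then the final simplification in (3) requires patiently collecting the $\|A_{T,S}^{(2)}\|$, $\|A\|$, $\|E\|$ and gap terms over a common denominator $1-\|A_{T,S}^{(2)}\|[\|E\|+\|A\|(\hat\delta(T,T')+\hat\delta(S,S'))]$. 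No new ideas beyond Theorem \ref{3T1} and Lemma \ref{3L2} are needed; the proof is essentially "apply Theorem \ref{3T1}, then apply Lemma \ref{3L2}, then simplify."
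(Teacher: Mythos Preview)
Your proposal is correct and follows essentially the same route as the paper: apply Theorem~\ref{3T1} to pass from $A_{T,S}^{(2)}$ to $A_{T',S'}^{(2)}$, verify $\|A_{T',S'}^{(2)}\|\|E\|<1$ from the hypotheses (the paper makes this explicit via the chain $\frac{\|A_{T,S}^{(2)}\|\|E\|}{1-\|A_{T,S}^{(2)}\|\|A\|(\hat\delta(T,T')+\hat\delta(S,S'))}<\frac{1+\|A_{T,S}^{(2)}\|\|A\|}{1+(\|A_{T,S}^{(2)}\|\|A\|)^2}\le 1$), then apply Lemma~\ref{3L2}. The only minor difference is in part~(3): the paper writes $\bar A_{T',S'}^{(2)}-A_{T,S}^{(2)}=(I_X+A_{T',S'}^{(2)}E)^{-1}\big(A_{T',S'}^{(2)}-A_{T,S}^{(2)}-A_{T',S'}^{(2)}EA_{T,S}^{(2)}\big)$ and bounds directly, whereas you use the triangle-inequality split; your split in fact yields a bound no larger than the stated one, so it also works.
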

\begin{proof} $A_{T^\prime,S^\prime}^{(2)}$ exists with
 $\|A_{T',S'}^{(2)}\|\leq \dfrac{\|A_{T,S}^{(2)}\|} {1-\|A_{T,S}^{(2)}\|\|A\|(\hat\delta(T,T')+\hat\delta(S,S'))}$
by Theorem \ref{3T1} when
$\max\{\hat{\delta}(T,T^\prime),\hat{\delta}(S,S^\prime)\}< \dfrac{1}{(1+\|A\|\|A_{T,S}^{(2)}\|)^2}$. Thus
$$
\|A_{T',S'}^{(2)}\|\|E\|\leq \dfrac{\|E\|\|A_{T,S}^{(2)}\|} {1-\|A_{T,S}^{(2)}\|\|A\|(\hat\delta(T,T')+\hat\delta(S,S'))}
<\frac{1+\|A_{T,S}^{(2)}\|\|A\|}{1+(\|A_{T,S}^{(2)}\|\|A\|)^2}\leq 1,
$$
that is, $\|A_{T',S'}^{(2)}\|\|E\|<1$ by above inequalities for $\|A_{T,S}^{(2)}\|\|A\|\geq\|A_{T,S}^{(2)}A\|\geq 1$.
Consequently, $\bar{A}_{T',S'}^{(2)}=(I_X+A_{T',S'}^{(2)}E)^{-1}A_{T',S'}^{(2)}$ by Lemma \ref{3L2}.
Simple computation shows that
\begin{align*}
\|\bar{A}_{T',S'}^{(2)}\|&\leq \frac{\|A_{T,S}^{(2)}\|}{1-\|A_{T,S}^{(2)}\|\{\|E\|+\|A\|(\hat{\delta}(T,T^\prime)+
\hat{\delta}(S,S^\prime))\}},\\
\bar{A}_{T^\prime,S^\prime}^{(2)}&=
\{I_X+P_{T^\prime}[I_X+P_{T^\prime}(I+A_{T,S}^{(2)}P_{S^\perp}A(P_{T^\prime}-P_{T}))^{-1}A_{T,S}^{(2)}(P_{S^\perp}
P_{{S^\prime}^\perp}-P_{S^\perp})\\
 &\ \,\times AP_{T^\prime}]^{-1}P_{T^\prime}(I_X+A_{T,S}^{(2)}P_{S^\perp}A(P_{T^\prime}-P_{T}))^{-1}A_{T,S}^{(2)}P_{S^\perp}
 P_{(S^\prime)^\perp}E\}^{-1}\\
&\ \,\times P_{T^\prime}\{I_X+P_{T^\prime}(I_X+A_{T,S}^{(2)}P_{S^\perp}A(P_{T^\prime}-P_{T}))^{-1}A_{T,S}^{(2)}(P_{S^\perp}
 P_{{S^\prime}^\perp}-P_{S^\perp})\\
&\ \,\times AP_{T^\prime}\}^{-1}
P_{T^\prime}(I_X+A_{T,S}^{(2)}P_{S^\perp}A(P_{T^\prime}-P_{T}))^{-1}A_{T,S}^{(2)}P_{S^\perp}P_{{S^\prime}^\perp}.
\end{align*}

Noting that
\begin{align*}
\bar{A}_{T^\prime,S^\prime}^{(2)}-A_{T,S}^{(2)}&=(I_X+A_{T^\prime,S^\prime}^{(2)}E)^{-1}A_{T^\prime,S^\prime}^{(2)}-A_{T,S}^{(2)}\\
&=(I_X+A_{T^\prime,S^\prime}^{(2)}E)^{-1}(A_{T^\prime,S^\prime}^{(2)}-(I_X+A_{T^\prime,S^\prime}^{(2)}E)A_{T,S}^{(2)})\\
&=(I_X+A_{T^\prime,S^\prime}^{(2)}E)^{-1}(A_{T^\prime,S^\prime}^{(2)}-A_{T,S}^{(2)}-A_{T^\prime,S^\prime}^{(2)}EA_{T,S}^{(2)}),
\end{align*}
we have
\begin{align*}
\|\bar{A}_{T^\prime,S^\prime}^{(2)}-A_{T,S}^{(2)}\|&\leq \|(I_X+A_{T^\prime,S^\prime}^{(2)}E)^{-1}\|(\|A_{T^\prime,S^\prime}^{(2)}-A_{T,S}^{(2)}\|+\|A_{T^\prime,S^\prime}^{(2)}EA_{T,S}^{(2)}\|)\\
&\leq \frac{1}{1-\|A_{T^\prime,S^\prime}^{(2)}\|\|E\|}(\|A_{T^\prime,S^\prime}^{(2)}-A_{T,S}^{(2)}\|+\|A_{T^\prime,S^\prime}^{(2)}\|\|E\|\|A_{T,S}^{(2)}\|)\\
&\leq\frac{\|A_{T,S}^{(2)}\|^2\Big[\|E\|+\frac{1+\sqrt{5}}{2}\|A\|(\hat{\delta}(T,T^\prime)+\hat{\delta}(S,S^\prime))\Big]}
{1-\|A_{T,S}^{(2)}\|\big[\|E\|+\|A\|(\hat{\delta}(T,T^\prime)+\hat{\delta}(S,S^\prime))\big]}.
\end{align*}
\qed
\end{proof}

\vskip0.2cm \noindent{\bf{Acknowledgement.}} The authors thank to
the referee for his (or her) helpful comments and suggestions.

\end{document}